\documentclass[12pt,reqno]{amsart}
\usepackage[utf8]{inputenc}
\usepackage{amsmath}
\usepackage{amssymb}
\usepackage{amsthm}
\usepackage{hyperref}
\usepackage{tikz}
\usepackage{todonotes}
\usepackage{float}
\usepackage{subcaption}
\usepackage{wrapfig}
\usepackage[margin=1in]{geometry}
\usepackage{textcomp}

\usetikzlibrary{fit}
\usetikzlibrary{patterns}
\usetikzlibrary{backgrounds}

\theoremstyle{definition}
\newtheorem{theorem}{Theorem}
\newtheorem{proposition}[theorem]{Proposition}
\newtheorem{lemma}[theorem]{Lemma}
\newtheorem{corollary}[theorem]{Corollary}
\newtheorem{definition}[theorem]{Definition}
\newtheorem{remark}[theorem]{Remark}
\newtheorem{example}[theorem]{Example}
\newtheorem{conjecture}[]{Conjecture}

\newtheorem*{theorem*}{Theorem}
\newtheorem*{lemma*}{Lemma}

\newcommand{\imm}{\mathrm{imm}}

\newcommand{\E}{\mathbb{E}}

\newcommand{\rank}{\mathrm{rank}}
\newcommand{\len}{\mathrm{len}}
\newcommand{\ET}{ET_{\le \ell}}
\newcommand{\ETsub}{ET_{\le \ell-1}}


\setlength\parindent{0pt}
\interfootnotelinepenalty=10000

\title[On torsion in eulerian magnitude homology of ER random graphs]{On torsion in eulerian magnitude homology of Erd\"os-R\'enyi random graphs}
\author{Giuliamaria Menara}
\date{}
\begin{document}
	\begin{sloppypar}
	\maketitle
	
	\begin{abstract}
		In this paper we investigate the regimes where an Erd\"os-R\'enyi random graph has torsion free eulerian magnitude homology groups.
		To this end, we start be introducing the eulerian Asao-Izumihara complex - a quotient CW-complex whose homology groups are isomorphic to direct summands of the graph eulerian magnitude homology group.
		We then proceed by producing a vanishing threshold for a shelling of eulerian Asao-Izumihara complex.
		This will lead to a result establishing the regimes where eulerian magnitude homology of Erd\"os-R\'enyi random graphs is torsion free. 
	\end{abstract}
	
	\section{Introduction}
	
	Magnitude, introduced by Leinster in \cite{leinster2013magnitude}, is an invariant for metric spaces that quantifies the number of effective points in the space.
	Hepworth and Willerton introduced magnitude homology for graphs as a categorification of magnitude \cite{hepworth2015categorifying}, and this concept was later extended to metric spaces and enriched categories by Leinster and Shulman \cite{leinster2021magnitude}.
	In recent years, various methods have been devised to calculate the magnitude homology groups \cite{asao2024magnitude,bottinelli2021magnitude,gu2018graph,hepworth2015categorifying,kaneta2021magnitude}.
	
	Eulerian magnitude homology is a variant recently introduced by Giusti and Menara in \cite{giusti2024eulerian} to highlight the connection between magnitude homology of simple graphs equipped with the path metric and their combinatorial structure.
	Here the authors introduce the complex of \emph{eulerian magnitude chains}, which are supported by trails without repeated vertices.
	Then they describe the strong connections between the $(k,k)$-eulerian magnitude homology groups and the graph's structure.
	Further, in the context of Erd\"os-R\'enyi random graphs they derive a vanishing threshold for the limiting expected rank of the $(k,k)$-eulerian magnitude homology in terms of the density parameter.
	
	In this paper, we will make some progress towards investigating the presence of torsion in eulerian magnitude homology.
	
	Torsion in standard magnitude homology was first studied by Kaneta and Yoshinaga in \cite{kaneta2021magnitude}, where the authors have analyzed the structure and implications of torsion in magnitude homology.
	Torsion in the magnitude homology of graphs was also studied by Sazdanovic and Summers in \cite{sazdanovic2021torsion} and by Caputi and Collari in \cite{caputi2023finite}.
	
	In the present work, as a first step towards exploring whether graphs have torsion in their eulerian magnitude homology groups, we turn our attention to Erd\"os-R\'enyi model for random graphs.
	This model is the most extensively studied and utilized model for random graphs, and it represents the maximum entropy distribution for graphs with a given expected edge proportion.
	Random complexes originating from Erd\"os-R\'enyi graphs are widely studied in stochastic topology \cite{kahle2009topology,kahle2011random,kahle2013limit}, and in studying this ``unstructured" example our intent is to create a foundation for understanding the torsion in ``structured" graphs.
	
	Adapting the construction introduced by Asao and Izumihara in \cite{asao2021geometric} to the context of eulerian magnitude homology, we able to produce for every pair of vertices $(a,b) \in G$ two simplicial complexes $\ET(a,b)$ and $\ETsub(a,b)$ such that the homology of the quotient $\ET(a,b) / \ETsub(a,b)$ is isomorphic to a direct summand of the eulerian magnitude homology  $EMH_{\ast,\ell}(G)$ up to degree shift.
	Therefore, producing a shellability result of the complexes $\ET(a,b)$ and $\ETsub(a,b)$ will in turn determine a torsion-free result for $EMH_{\ast,\ell}(G)$.
	In Theorem \ref{thm:ET(a,b) shellable} we achieve such shellability result for $\ET(a,b)$ in terms of the density parameter.
	Further, in Corollary \ref{cor:if nonvanishing then torsion free} we link the torsion-free result for eulerian magnitude homology groups stated in Theorem \ref{thm:torsion free} with the vanishing threshold produced in \cite[Thm. 4.4]{giusti2024eulerian}, determining sufficient conditions under which if eulerian magnitude homology is non-vanishing, then it is also torsion-free.

	\subsection{Outline}
	The paper is organized as follows. 
	We start by recalling in Section \ref{sec:Background} some general background about graphs, eulerian magnitude homology and shellability.
	In Section \ref{sec:A-I for EMH} we introduce the \emph{eulerian} Asao-Izumihara complex.
	We then investigate in Section \ref{sec:homotopy type EMC} the probability regimes in which the eulerian Asao-Izumihara complex is shellable, and we conclude by producing a vanishing threshold for torsion in eulerian magnitude homology groups.	
	Finally, in Section \ref{sec:future directions} we propose extensions of the current work and identify open questions that could deepen the understanding of the topic.

	\section{Background}
	\label{sec:Background}
	
	We begin by recalling relevant definitions and results.
	We assume readers are familiar with the general theory of simplicial homology (for a thorough exposition see~\cite{hatcher2005algebraic}).
	Throughout the paper we adopt the notation $[m] = \{1, \dots, m\}$ and $[m]_0 = \{0, \dots, m\}$ for common indexing sets.
	
	\subsection{Graph terminology and notation}
	\label{subsec:graph definitions}
	
	An undirected graph is a pair $G=(V,E)$ where $V$ is a set of vertices and $E$ is a set of edges (unordered pairs of vertices).
	A \emph{walk} in such a graph $G$ is an ordered sequence of vertices $x_0,x_1,\ldots,x_k\in V$ such that for every index $i \in [k]_0$ there is an edge $\{x_i,x_{i+1}\}\in E$.
	A \emph{path} is a walk with no repeated vertices.
	For the purposes of introducing eulerian magnitude homology we assume that all graphs are simple, i.e. they have no self-loops and no multiedges~\cite{leinster2019magnitude}.
	One can interpret the set of vertices of a graph as an extended metric space (i.e. a metric space with infinity allowed as a distance) by taking the \emph{path metric} $d(u,v)$ to be equal to the length of a shortest path in $G$ from $u$ to $v$, if such a path exists, and taking $d(u,v) = \infty$ if $u$ and $v$ lie in different components of $G$.
	
	\begin{definition}
		\label{def:ktrail}
		Let $G = (V,E)$ be a graph, and $k$ a non-negative integer. A \emph{$k$-trail} $\bar{x}$ in $G$ is a $(k+1)$-tuple $(x_0,\dots,x_k) \in V^{k+1}$ of vertices for which $x_i \neq x_{i+1}$ and $d(x_i,x_{i+1})<\infty$ for every $i \in [k-1]_0$.
		The \emph{length} of a $k$-trail $(x_0,\dots,x_k)$ in $G$ is defined as the minimum length of a walk that visits $x_0,x_1,\ldots,x_k$ in this order:
		\[
		\len(x_0,\dots,x_k) = d(x_0,x_1)+\cdots + d(x_{k-1},x_k).
		\]
		We call the vertices $x_0, \dots x_{k}$ the \emph{landmarks}, $x_0$ the \emph{starting point}, and $x_k$ the \emph{ending point} of the $k$-trail.
	\end{definition}

	\subsection{Eulerian magnitude homology}
	\label{subsec:EMH definitions}
	
	\sloppy
	The \emph{magnitude homology} of a graph $G$, $MH_{k,\ell}(G)$, was first introduced by Hepworth and Willerton in~\cite{hepworth2015categorifying}, and the \emph{eulerian} magnitude homology of a graph $EMH_{k,\ell}(G)$ is a variant of it with a stronger connection to the subgraph structures of $G$.
	Specifically, while the building blocks of standard magnitude homology are tuples of vertices $(x_0,\dots,x_k)$ where we ask that \emph{consecutive} vertices are different, eulerian magnitude homology is defined starting from tuples of vertices $(x_0,\dots,x_k)$ where we ask that \emph{all} landmarks are different.
	
	Eulerian magnitude homology was recently introduced by Giusti and Menara in~\cite{giusti2024eulerian} and we recall here the construction.
	
	\begin{definition}(Eulerian magnitude chain)
		Let $G=(V,E)$ be a graph.
		We define the $(k,\ell)$-eulerian magnitude chain $EMC_{k,\ell}(G)$ to be the free abelian group generated by trails $(x_0,\dots,x_k) \in V^{k+1}$ such that $x_i \neq x_j$ for every $0\leq i,j \leq k$ and $\len(x_0,\dots,x_k)=\ell$.
	\end{definition}
	
	It is straightforward to demonstrate that the eulerian magnitude chain is trivial when the length of the path is too short to support the necessary landmarks.
	
	\begin{lemma}[{c.f. \cite[Proposition 10]{hepworth2015categorifying}}]
		\label{lem:LowerTriangular}
		Let $G$ be a graph, and $k > \ell$ non-negative integers. Then $EMC_{k,\ell}(G) \cong 0.$
	\end{lemma}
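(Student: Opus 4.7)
The plan is to show that under the hypothesis $k > \ell$ there are no generating trails, so the free abelian group is trivial.

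First I would unpack the definition. A generator of $EMC_{k,\ell}(G)$ is a tuple $(x_0,\dots,x_k) \in V^{k+1}$ with pairwise distinct landmarks and total length exactly $\ell$. Since the landmarks are pairwise distinct, in particular consecutive landmarks satisfy $x_i \neq x_{i+1}$, and therefore $d(x_i,x_{i+1}) \geq 1$ for every $i \in [k-1]_0$ (using that $d$ is the path metric on a simple graph, so the minimum positive distance is $1$).

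Next I would sum these lower bounds over the $k$ consecutive pairs:
\[
\len(x_0,\dots,x_k) \;=\; \sum_{i=0}^{k-1} d(x_i,x_{i+1}) \;\geq\; k.
\]
If $k > \ell$, this forces $\len(x_0,\dots,x_k) \geq k > \ell$, contradicting the requirement that the length equal $\ell$. Hence no such trail exists, the generating set is empty, and $EMC_{k,\ell}(G) \cong 0$.

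There is no real obstacle here; the only subtlety worth flagging is that the distinctness of \emph{all} landmarks (not just consecutive ones) is not strictly needed for this bound — consecutive distinctness already suffices — but it is the eulerian hypothesis that we are working under, and noting this parallels the analogous statement for standard magnitude homology cited from \cite[Proposition 10]{hepworth2015categorifying}.
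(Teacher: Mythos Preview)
Your proof is correct and follows essentially the same argument as the paper: both observe that pairwise (in particular consecutive) distinctness of landmarks forces each $d(x_i,x_{i+1}) \geq 1$, whence $\len(x_0,\dots,x_k) \geq k$, contradicting $\len = \ell < k$. Your remark that only consecutive distinctness is needed is a nice aside and correctly identifies why the same bound holds for ordinary magnitude chains.
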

	
	\begin{proof}
		Suppose $EMC_{k,\ell}(G)\neq 0.$
		Then, there must exist a $k$-trail $(x_0,\dots,x_k)$ in $G$ so that $\len(x_0,\dots,x_k)=d(x_0,x_1)+\cdots+d(x_{k-1},x_k) = \ell$.
		However, as all vertices in the $k$-trail must be distinct, $d(x_i,x_{i+1}) \geq 1$ for $i \in [k-1]_0$, so $k$ can be at most $\ell$.
	\end{proof}
	
	\begin{definition}(Differential)
		\label{def:differential}
		Denote by $(x_0,\dots,\hat{x_i},\dots,x_k)$ the $k$-tuple obtained by removing the $i$-th vertex from the $(k+1)$-tuple $(x_0,\dots,x_k)$.  We define the \emph{differential}
		\[
		\partial_{k,\ell}: EMC_{k,\ell}(G) \to EMC_{k-1,\ell}(G)
		\]
		to be the signed sum $\partial_{k,\ell}= \sum_{i\in [k-1]}(-1)^{i}\partial_{k,\ell}^i$ of chains corresponding to omitting landmarks without shortening the walk or changing its starting or ending points,
		\[
		\partial_{k,\ell}^i(x_0,\dots,x_k) = \begin{cases}
			(x_0,\dots,\hat{x_i},\dots,x_k) , &\text{ if } \len(x_0,\dots,\hat{x_i},\dots,x_k) = \ell, \\
			0, &\text{ otherwise.}\\
		\end{cases}
		\]
	\end{definition}
	
	For a non-negative integer $\ell$, we obtain the \emph{eulerian magnitude chain complex}, $EMC_{*,\ell}(G),$ given by the following sequence of free abelian groups and differentials.
	
	\begin{definition}(Eulerian magnitude chain complex)
		We indicate as $EMC_{*,\ell}(G)$ the following sequence of free abelian groups connected by differentials
		\[
		\cdots \rightarrow EMC_{k+1,\ell}(G) \xrightarrow{\partial_{k+1,\ell}} EMC_{k,\ell}(G) \xrightarrow{\partial_{k,\ell}} EMC_{k-1,\ell}(G) \to \cdots
		\]
	\end{definition}
	
	The differential map used here is the one induced by standard magnitude, and it is shown in \cite[Lemma 11]{hepworth2015categorifying} that the composition $\partial_{k,\ell} \circ \partial_{k+1,\ell}$ vanishes, justifying the name ``differential" and allowing the definition the corresponding bigraded homology groups of a graph.
	
	\begin{definition}(Eulerian magnitude homology)
		\label{def_EMH}
		The $(k,\ell)$-eulerian magnitude homology group of a graph $G$ is defined by
		\[
		EMH_{k,l}(G) = H_k(EMC_{*,l}(G)) = \frac{\ker(\partial_{k,\ell})}{\imm(\partial_{k+1,\ell})}.
		\]
	\end{definition}
	
	\begin{figure}
		\begin{minipage}{0.3\textwidth}
			
			\begin{tikzpicture}[node distance={14mm}, thick, main/.style = {draw, circle}]
				\node[main] (0) {$0$}; 
				\node[main] (1) [right of=0] {$1$};  
				\node[main] (2) [above of=1] {$2$};
				\node[main] (3) [right of=1] {$3$};
				\node[main] (4) [above of=3] {$4$};
				\draw (0) -- (1);
				\draw (1) -- (2);
				\draw (0) -- (2);
				\draw (2) -- (3);
				\draw (2) -- (4);
				\draw (3) -- (4);
			\end{tikzpicture} 
			\caption{Graph $G$}
			\label{fig:toyexampleEMH}
			
		\end{minipage}
		\hfill
		\begin{minipage}{0.68\textwidth}
			\begin{example}
				\label{ex:toyexampleEMH}
				We will compute $EMH_{2,2}(G)$ for the graph $G$ in Figure \ref{fig:toyexampleEMH}.
				$EMC_{2,2}(G)$ is generated by the $2$-paths in $G$ of length $2$. There are twenty such paths, consisting of all possible walks of length two in the graph visiting different landmarks: (0,1,2), (0,2,1), (0,2,3), (0,2,4), (1,0,2), (1,2,0), (1,2,3), (1,2,4), (2,0,1), (2,1,0), (2,3,4), (2,4,3), (3,2,0), (3,2,1), (3,2,4), (3,4,2), (4,2,0), (4,2,1), (4,2,3), (4,3,2).
				Similarly, $EMC_{1,2}(G)$ is generated by the eight $1$-paths in $G$ of length $2$:(0,3), (0,4), (1,3), (1,4), (3,0), (3,1), (4,0), (4,1).
				Because $\partial_{2,2}$ only omits the center vertex, it is easy to check that the kernel is generated by the $12$ elements visiting the triangle with vertices 0,1,2 and the one with vertices 2,3,4.  
				Also, by Lemma \ref{lem:LowerTriangular}, $EMC_{3,2}(G)$ is the trivial group, and thus the image of $\partial_{3,2}$ is $\langle 0 \rangle$. Thus, $\rank(EMH_{2,2}(G))=12$, generated by those walks between vertices 0,1,2 and 2,3,4.
			\end{example}
		\end{minipage}
	\end{figure}

	Notice that by construction we have the following proposition.
	
	\begin{proposition}
		For $\ell\ge 0$, the following direct sum decomposition holds:
		\[ 
		EMC_{\ast, \ell}(G) =\bigoplus_{a, b\in V(G)} EMC_{\ast, \ell}(a, b),
		\]
		where $EMC_{\ast, \ell}(a, b)$ is the subcomplex of $EMC_{\ast, \ell}(G)$ generated by trails which start at $a$ and end at $b$.
	\end{proposition}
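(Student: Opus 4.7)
The plan is to reduce the proposition to a bookkeeping exercise: first decompose the chain groups $EMC_{k,\ell}(G)$ according to the endpoints of each generating trail, and then verify that the differential $\partial_{k,\ell}$ respects this decomposition, so that it lifts to a splitting at the level of chain complexes.

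First I would note that, by definition, every generator of $EMC_{k,\ell}(G)$ is a trail $(x_0, \ldots, x_k)$ whose starting point $x_0$ and ending point $x_k$ are canonically determined. Partitioning the basis by the pair $(x_0, x_k) \in V(G)^2$ therefore yields a direct sum decomposition
$$EMC_{k,\ell}(G) = \bigoplus_{a,b \in V(G)} EMC_{k,\ell}(a,b)$$
of free abelian groups in each homological degree, where $EMC_{k,\ell}(a,b)$ is spanned by those length-$\ell$ trails with $x_0 = a$ and $x_k = b$.

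The only substantive point is to check that $\partial_{k,\ell}$ preserves this splitting. Here the key observation is that in Definition \ref{def:differential} the index of summation runs over $i \in [k-1] = \{1, \ldots, k-1\}$, so every face map $\partial^i_{k,\ell}$ deletes an \emph{interior} landmark. Consequently, if $(x_0, \ldots, x_k) \in EMC_{k,\ell}(a,b)$, then every non-zero term of $\partial_{k,\ell}(x_0, \ldots, x_k)$ is again a length-$\ell$ trail that starts at $a$ and ends at $b$, and hence lies in $EMC_{k-1,\ell}(a,b)$. Combined with the chain-group decomposition above, this promotes the splitting to a direct sum of subcomplexes, proving the proposition.

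The argument is essentially structural and I do not anticipate any real obstacle; the only delicate point is making explicit use of the convention, built into the definition of $\partial_{k,\ell}$, that the first and last landmarks are never discarded, which is precisely what keeps each pair of endpoints $(a,b)$ stable under the differential.
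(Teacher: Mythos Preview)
Your argument is correct and is exactly the ``by construction'' verification the paper has in mind: the paper states the proposition without proof, relying on the evident facts that generators are indexed by their endpoints and that the differential in Definition~\ref{def:differential} only removes interior landmarks. Your write-up just makes this explicit.
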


	\subsection{Shellable simplicial complexes}
	\label{subsec:shellable simplcial complexes}
	
	We recall the definition of shellable simplicial complex.
	
	\begin{definition}[{\cite[Definition 2.1]{bjorner1996shellable}}]
		\label{def:shellable}
		If $X$ is a finite simplicial complex, then a \emph{shelling} of $X$ is an ordering $F_1,\dots,F_t$ of the facets (maximal faces) of $X$ such that $F_k \cap \bigcup_{i=1}^{k-1}F_i$ is a non-empty union of facets of $F_k$ for $k \geq 2$.
		If $X$ has a shelling, we say it is \emph{shellable}.
	\end{definition}
	
	In other words, we ask that the last simplex $F_k$ meets the previous simplices along some union $B_k$ of top-dimensional simplices of the boundary of $F_k$, so that $X$ can be built stepwise by introducing the
	facets one at a time and attaching each new facet $F_k$ to the complex previously built in the nicest possible fashion.
	
	Suppose $X$ is a non-pure simplicial complex.
	In this case the first facet of a shelling is always of maximal dimension.
	In fact, if $X$ is shellable there is always a shelling in which the facets appear in order of decreasing dimension.
	
	\begin{lemma}[{\cite[Rearrangement lemma, 2.6]{bjorner1996shellable}}]
		\label{lem:structure nonpure shelling}
		Let $F_1, F_2, \dots ,F_t$ be a shelling of $X$.
		Let $F_{i_1}, F_{i_2}, \dots ,F_{i_t}$ be the rearrangement obtained by taking first all
		facets of dimension $d = \dim X$ in the induced order, then all facets of dimension $d-1$ in the induced order, and continuing this way in order of decreasing dimension.
		Then this rearrangement is also a shelling.
	\end{lemma}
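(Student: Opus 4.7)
The plan is to reduce the lemma to a statement about adjacent transpositions: if $F_1,\ldots,F_t$ is a shelling and $\dim F_j < \dim F_{j+1}$ for some $j$, then exchanging $F_j$ and $F_{j+1}$ yields another shelling. Given this, a finite sequence of such swaps (an insertion sort moving each top-dimensional facet leftward past all lower-dimensional facets in front of it, then repeating for dimension $d-1$, and so on) produces the required rearrangement while preserving the induced order within each dimension. As a preliminary observation, in any shelling one must have $\dim F_1 \geq \dim F_2$, since the condition at index $2$ forces $F_1 \cap F_2$ to contain a $(\dim F_2 - 1)$-dimensional face of $F_2$, and if $\dim F_1 < \dim F_2$ this face would be all of $F_1$, contradicting maximality. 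Hence the swaps only ever need to be performed at positions $j \geq 2$.

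Fix such $j \geq 2$ and set $U_k = \bigcup_{i \leq k} F_i$. The shelling conditions at indices outside $\{j, j+1\}$ are unaffected by the swap, so it suffices to verify that $F_{j+1} \cap U_{j-1}$ is a non-empty union of facets of $F_{j+1}$ and that $F_j \cap (U_{j-1} \cup F_{j+1})$ is a non-empty union of facets of $F_j$. The essential dimensional input is that $\dim(F_j \cap F_{j+1}) \leq \dim F_{j+1} - 2$: the intersection is a face of $F_j$ of dimension at most $\dim F_j \leq \dim F_{j+1} - 1$, and the extreme case $\dim(F_j \cap F_{j+1}) = \dim F_{j+1} - 1$ would force $F_j \cap F_{j+1} = F_j$ and hence $F_j \subseteq F_{j+1}$, contradicting the maximality of $F_j$ in $X$.

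By the original shelling, $(F_{j+1} \cap U_{j-1}) \cup (F_{j+1} \cap F_j) = G_1 \cup \cdots \cup G_r$ is a non-empty union of $(\dim F_{j+1} - 1)$-dimensional faces of $F_{j+1}$. The crucial step is to show that each $G_m$ is contained in $U_{j-1}$. Indeed, for any point $x$ in the relative interior of $G_m$, the relative interiors of distinct faces of $F_{j+1}$ are disjoint, so $x$ cannot lie in the strictly lower-dimensional face $F_j \cap F_{j+1}$; hence $x \in F_{j+1} \cap U_{j-1}$, and closedness of $U_{j-1}$ gives $G_m \subseteq U_{j-1}$. Both swap conditions then follow at once: the chain $G_1 \cup \cdots \cup G_r \subseteq F_{j+1} \cap U_{j-1} \subseteq F_{j+1} \cap U_j = G_1 \cup \cdots \cup G_r$ shows that $F_{j+1} \cap U_{j-1}$ is precisely the required non-empty union of facets of $F_{j+1}$; and the inclusion $F_j \cap F_{j+1} \subseteq U_{j-1}$ collapses $F_j \cap (U_{j-1} \cup F_{j+1})$ to $F_j \cap U_{j-1}$, already a non-empty union of facets of $F_j$ by the original shelling.

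The main technical obstacle is the clean invocation of the partition of a simplex by the relative interiors of its faces, together with the dimensional bookkeeping for $F_j \cap F_{j+1}$; once those are in place, the set-theoretic manipulations dispatch both swap conditions uniformly, and the outer bubble-sort reduction is purely combinatorial.
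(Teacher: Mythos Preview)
The paper does not supply its own proof of this lemma; it is quoted verbatim from \cite[Lemma~2.6]{bjorner1996shellable} and used as a black box. Your argument is correct and is essentially the standard one given there: reduce to the claim that a single adjacent swap $F_j \leftrightarrow F_{j+1}$ with $\dim F_j < \dim F_{j+1}$ preserves shellability, and then perform a stable sort by decreasing dimension via such swaps.

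The only remark worth making is that your relative-interior/closure argument, while valid on the geometric realisation, is heavier than necessary. Purely combinatorially: each $G_m$ is a $(\dim F_{j+1}-1)$-face of $F_{j+1}$ lying in $U_j$, so $G_m \subseteq F_i$ for some $i \le j$; if $i=j$ then $G_m \subseteq F_j \cap F_{j+1}$, contradicting your dimension bound $\dim(F_j\cap F_{j+1}) \le \dim F_{j+1}-2$. Hence $i \le j-1$ and $G_m \subseteq U_{j-1}$, with no need to pass to the geometric realisation. The remainder of your argument (the sandwich $G_1\cup\cdots\cup G_r \subseteq F_{j+1}\cap U_{j-1} \subseteq F_{j+1}\cap U_j$ and the collapse $F_j\cap(U_{j-1}\cup F_{j+1}) = F_j\cap U_{j-1}$) is clean and correct.
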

	
	\begin{theorem}[{\cite[Theorem 2.9]{bjorner1996shellable}}]
		\label{thm:structure nonpure shelling}
		Let $X$ be a simplicial complex, and let $0 \leq r \leq s \leq \dim X$.
		Define $X^{(r,s)}=\{\sigma \in X \text{ such that } \dim \sigma \leq s \text{ and } \sigma \in F \text{ for some facet $F$ with } \dim F \geq r  \}$.
		If $X$ is shellable, then so is $X^{(r,s)}$ for all $r \leq s$.
	\end{theorem}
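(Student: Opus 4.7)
The plan is to decompose the operation $X \mapsto X^{(r,s)}$ into two elementary steps and prove shellability is preserved under each. First, unpacking definitions shows that $X^{(r,s)} = \bigl(X^{(r,\dim X)}\bigr)^{(0,s)}$: a simplex $\sigma$ of dimension at most $s$ lies in some facet of $X$ of dimension at least $r$ if and only if it lies in $X^{(r,\dim X)}$, and every such simplex is automatically contained in some facet of that latter complex. It therefore suffices to treat the two cases $s = \dim X$ (trimming the low-dimensional facets of $X$) and $r = 0$ (truncating the dimension) separately, then compose them.

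For the first case, apply Lemma \ref{lem:structure nonpure shelling} to rearrange a given shelling $F_1, \dots, F_t$ of $X$ so that facets appear in order of decreasing dimension. The facets of dimension at least $r$ then form an initial segment $F_{i_1}, \dots, F_{i_m}$, whose union as a subcomplex is precisely $X^{(r,\dim X)}$. The shelling condition for this prefix is inherited verbatim from the shelling of $X$, since for each $k \leq m$ the intersection $F_{i_k} \cap \bigcup_{j<k} F_{i_j}$ is unchanged by the removal of later, lower-dimensional facets.

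For the second case, starting from a shelling $G_1, \dots, G_t$ of $Y$ I would produce a shelling of $Y^{(0,s)}$ by processing the $G_i$ in order: if $\dim G_i \leq s$, then $G_i$ is already a facet of $Y^{(0,s)}$ and is appended to the list; if $\dim G_i > s$, the $s$-dimensional faces of $G_i$ are appended in an order induced by a fixed shelling of the simplex boundary, skipping any that have already appeared. At each insertion one verifies that the new entry $H$ meets the previously built subcomplex in a non-empty union of codimension-one faces of $H$.

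The main obstacle is precisely this last verification. When $\dim G_i > s$, the newly appended $s$-dimensional faces of $G_i$ must attach correctly both to earlier $s$-faces of the same $G_i$ and to the $s$-skeleton of $\bigcup_{j<i} G_j$. The former attachment is controlled by the classical fact that the boundary of a simplex is shellable, so that its $s$-skeleton inherits a shelling; the latter uses that $G_i \cap \bigcup_{j<i} G_j$ is a non-empty union of maximal faces of $G_i$, and hence its intersection with the $s$-skeleton of $G_i$ is a union of maximal faces of the $s$-truncation of $G_i$. Assembling these ingredients into a valid global ordering is routine but requires careful bookkeeping of which $s$-subfaces have already been introduced when a given $G_i$ is processed.
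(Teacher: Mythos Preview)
The paper does not supply a proof of this theorem; it is quoted verbatim as a background result from Bj\"orner and Wachs, so there is no ``paper's own proof'' to compare your attempt against.

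That said, your proposal is a faithful outline of the original Bj\"orner--Wachs argument. The factorisation $X^{(r,s)} = \bigl(X^{(r,\dim X)}\bigr)^{(0,s)}$ is exactly how they reduce the statement to two special cases. Your treatment of the first case (taking the subcomplex generated by the facets of dimension at least $r$) via the Rearrangement Lemma is the intended one: after rearranging into decreasing dimension, the large facets form an initial segment and any initial segment of a shelling is again a shelling. Your treatment of the second case (passing to the $s$-skeleton) is likewise the standard approach: process the facets in shelling order, replacing each facet of dimension exceeding $s$ by its $s$-faces listed in a shelling order of the $s$-skeleton of a simplex, omitting those already present. The verification you flag as the ``main obstacle'' is genuine but routine once one observes, as you do, that the $s$-skeleton of a simplex admits a shelling in which the $s$-faces contained in any prescribed collection of codimension-one faces come first; this follows because the boundary of a simplex is shellable in \emph{any} order of its facets, and skeleta of pure shellable complexes are shellable by the classical pure case. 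One small point worth making explicit: it is cleanest to first apply the Rearrangement Lemma again to the shelling of $Y$ in your second case, so that when a facet $G_i$ of dimension at most $s$ is reached, all earlier $G_j$ have already contributed their full $s$-skeleta and the identity $G_i \cap (\text{previously built complex}) = G_i \cap \bigcup_{j<i} G_j$ is immediate.
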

	
	Lemma \ref{lem:structure nonpure shelling} and Theorem \ref{thm:structure nonpure shelling} can be interpreted as providing a kind of ``structure theorem'', describing how a general shellable complex $X$ is put together from pure
	shellable complexes.
	First there is the pure shellable complex $X^1=X^{(d,d)}$ generated by all facets of maximal size.
	Then $X^1$'s $(d-1)$-skeleton, which is also shellable, is extended by shelling steps in dimension $d-1$ to obtain $X^2=X^{(d-1,d)}$.
	Then $X^2$'s $(d-2)$-skeleton is extended by shelling steps in dimension $(d-2)$ to obtain $X^{(d-2,d)}$, and so on until all of $X=X^{(0,d)}$ has been constructed.
	
	A shellable simplicial complex enjoys several strong properties of a combinatorial, topological and algebraic nature.	
	Let it suffice here to mention that it is homotopy equivalent to a wedge sum of spheres, one for each spanning simplex of corresponding dimension~\cite{farmer1978cellular}.

	\section{Eulerian Asao-Izumihara complex}
	\label{sec:A-I for EMH}
	
	We introduced in this section the \emph{eulerian} Asao-Izumihara complex.
	
	Recall that the Asao-Izumihara complex is a CW complex which is obtained as the quotient of a simplicial complex $K_{\ell} (a,b)$ divided by a subcomplex $K'_{\ell} (a,b)$, and was proposed in~\cite{asao2021geometric} as a geometric approach to compute magnitude homology of general graphs.
	Here we adapt this construction to the context of eulerian magnitude homology, providing a way of replacing the computation of the eulerian magnitude homology $EMH_{k,\ell}(G)$ by that of simplicial homology.  
	\\\\
	Let us start by recalling the Asao-Izumihara complex.
	Let $G=(V,E)$ be a connected graph and fix $k \geq 1$.
	For any $a,b \in V$ the set of walks with length $\ell$ which start with $a$ and end with $b$ is denoted by
	\[
	W_{\ell}(a,b):=\{\bar{x}=(x_0,\dots,x_k) \text{ walk in $G$ } | x_0=a,x_k=b, \len(\bar{x})=\ell \}.
	\]

	\begin{definition}[c.f.{\cite[Def. 4.1]{asao2021geometric}}]
		Let G be a graph, and $a, b \in V$, $\ell \geq 3$.
		\begin{alignat*}{3}
			&K_{\ell}(a,b) := \{ &&\emptyset \neq ((x_{i_1},i_1),\dots,(x_{i_k},i_k)) \subset V \times \{1,\dots,\ell -1\} \\
			&	&& | (a, x_{i_1},\dots,x_{i_k},b) \prec\exists (a, x_1,\dots, x_{\ell -1}, b) \in W_{\ell}(a,b) \} \\
			&K'_{\ell}(a,b) := \{&& ((x_{i_1},i_1),\dots,(x_{i_k},i_k)) \in K_{\ell}(a,b) | \len(a, x_{i_1},\dots,x_{i_k},b) \leq \ell -1 \}.
		\end{alignat*}
	\end{definition}
	
	\begin{remark}
		Following \cite{asao2021geometric}, we will denote $((x_{i_1},i_1),\dots,(x_{i_k},i_k))$ by $(x_{i_1},\dots,x_{i_k})$ when there is no confusion.
		
		It can also be easily seen that $K_{\ell}(a,b)$ is a simplicial complex and $K'_{\ell}(a,b)$ is a subcomplex.
	\end{remark}
	
	\begin{theorem}[c.f.{\cite[Thm. 4.3]{asao2021geometric}}]
		Let $\ell \geq 3$ and $\ast \geq 0$.
		Then, the isomorphism
		\[
		(C_{\ast}(K_{\ell}(a,b),K'_{\ell}(a,b)),-\partial) \cong
		(MC_{\ast +2,\ell}(a,b),\partial)
		\]
		of chain complexes holds.
	\end{theorem}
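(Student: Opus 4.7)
The plan is to exhibit an explicit identification of generators and then verify that it intertwines the two boundary operators up to a global sign.

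On generators, a $*$-simplex of $K_{\ell}(a,b)$ is an ordered tuple $\sigma = ((x_{i_1},i_1),\dots,(x_{i_{*+1}},i_{*+1}))$ with $1 \le i_1 < \dots < i_{*+1} \le \ell-1$, and the relative complex $C_{\ast}(K_{\ell}(a,b), K'_{\ell}(a,b))$ is the free abelian group on those $\sigma$ satisfying $\len(a,x_{i_1},\dots,x_{i_{*+1}},b)=\ell$. First I would send such a $\sigma$ to the magnitude generator $\varphi(\sigma) := (a, x_{i_1}, \dots, x_{i_{*+1}}, b) \in MC_{*+2,\ell}(a,b)$. The image has $*+3$ entries, consecutive entries are distinct (each intermediate distance along a realised length-$\ell$ walk is at least one), and the total length is $\ell$, so $\varphi(\sigma)$ is a valid generator.

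For the inverse, a generator $(a, y_1, \dots, y_{*+1}, b)$ of $MC_{*+2,\ell}(a,b)$ is sent to the tuple whose $r$-th coordinate is $(y_r, j_r)$, with the cumulative distance $j_r := d(a,y_1)+\sum_{s=1}^{r-1}d(y_s,y_{s+1})$. Since the total length equals $\ell$, the $j_r$ lie in $\{1,\dots,\ell-1\}$ and are strictly increasing. The point requiring care is the realizability condition: the tuple must occur as a subsequence of some walk in $W_{\ell}(a,b)$. I would obtain such a walk by concatenating geodesic segments between successive $y_r$'s, which yields a walk of length exactly $\ell$ containing $(a,y_1,\dots,y_{*+1},b)$ as a subsequence and thereby certifies membership in $K_{\ell}(a,b) \setminus K'_{\ell}(a,b)$.

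Next I would check compatibility with the differentials. The simplicial face operator on $C_{\ast}(K_{\ell}, K'_{\ell})$ removes the $j$-th pair with sign $(-1)^{j-1}$, and the resulting face lies in $K'_{\ell}(a,b)$, hence represents zero in the relative complex, precisely when the shortened tuple has length strictly less than $\ell$. The magnitude differential $\partial_{*+2,\ell}$ removes the $j$-th interior vertex with sign $(-1)^{j}$ and is defined to vanish under exactly the same condition. Under $\varphi$ the two face operators match term by term, differing only by the global factor $(-1)^{j-1}/(-1)^{j}=-1$, which is the source of the $-\partial$ in the statement.

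The main obstacle is the surjectivity of $\varphi$, namely the realizability claim that every length-$\ell$ magnitude chain from $a$ to $b$ arises from some simplex in $K_{\ell} \setminus K'_{\ell}$; this is where one uses the graph geodesic structure in an essential way, whereas the index ordering, the sign matching, and the identification of the boundary vanishing conditions are mechanical once the bijection is in place.
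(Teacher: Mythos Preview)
The paper does not supply its own proof of this statement; it is quoted from Asao and Izumihara \cite{asao2021geometric} as background for the eulerian analogue. Your argument is essentially the standard one from that source and is correct in outline.

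One step you leave implicit deserves to be made explicit: that your two maps are mutual inverses. The composite $\varphi\circ\psi$ is visibly the identity, but $\psi\circ\varphi=\mathrm{id}$ requires that for any simplex $((x_{i_1},i_1),\dots,(x_{i_{*+1}},i_{*+1}))$ in $K_\ell(a,b)\setminus K'_\ell(a,b)$ the index $i_r$ is \emph{forced} to equal the cumulative distance $d(a,x_{i_1})+\sum_{s<r}d(x_{i_s},x_{i_{s+1}})$. This holds because the realising walk has exactly $i_r-i_{r-1}$ edges between positions $i_{r-1}$ and $i_r$, whence $d(x_{i_{r-1}},x_{i_r})\le i_r-i_{r-1}$; summing these inequalities and comparing with $\len=\ell$ forces equality throughout. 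Without this observation two simplices with the same vertex sequence but different index decorations could in principle collide under $\varphi$, so your remark that surjectivity is ``the main obstacle'' somewhat undersells the injectivity direction.
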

	
	\begin{corollary}[c.f.{\cite[Cor. 4.4]{asao2021geometric}}]
		Let $\ell \geq 3$. 
		\begin{itemize}
			\item If $k \geq 3$, $MH_{k,\ell}(a,b) \cong H_{k-2}(K_{\ell}(a,b),K'_{\ell}(a,b))$.
			\item If $k = 2$, we also have
			\[
			MH_{2, \ell}(a, b) \cong \begin{cases}H_{0}(K_{\ell}(a, b), K'_{\ell}(a, b)) & \text{ if } d(a, b) < \ell, \\ \tilde{H}_{0}(K_{\ell}(a, b))&\text{ if } d(a, b) = \ell,\end{cases}
			\]
			where $\tilde{H}_{\ast}$ denotes the reduced homology group. 
		\end{itemize}
	\end{corollary}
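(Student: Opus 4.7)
The plan is to derive this corollary directly from the chain-level isomorphism $(C_{\ast}(K_{\ell}(a,b),K'_{\ell}(a,b)),-\partial) \cong (MC_{\ast+2,\ell}(a,b),\partial)$ furnished by the preceding theorem. The case $k \geq 3$ is essentially automatic: homology is insensitive to a global sign on the differential, and for these degrees both sides compute kernels and images strictly in the interior of the respective chain complexes. Reindexing via $k = \ast + 2$ then yields $MH_{k,\ell}(a,b) \cong H_{k-2}(K_{\ell}(a,b),K'_{\ell}(a,b))$ directly.

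The case $k=2$ requires more care because $MH_{2,\ell}(a,b) = \ker \partial_{2,\ell}/\imm \partial_{3,\ell}$ involves the differential $\partial_{2,\ell}\colon MC_{2,\ell}(a,b)\to MC_{1,\ell}(a,b)$, which exits the non-negative range where the chain isomorphism is asserted. I would split according to $d(a,b)$. If $d(a,b) < \ell$, then the only potential generator of $MC_{1,\ell}(a,b)$ is the $1$-trail $(a,b)$, whose length equals $d(a,b) \neq \ell$; hence $MC_{1,\ell}(a,b) = 0$ and $\partial_{2,\ell}$ is identically zero. Therefore $\ker \partial_{2,\ell} = MC_{2,\ell}(a,b)$, and the identification at $\ast = 0,1$ turns $MC_{2,\ell}(a,b)/\imm \partial_{3,\ell}$ into $H_0(K_\ell(a,b),K'_\ell(a,b))$.

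If instead $d(a,b) = \ell$, any intermediate vertex $x_i$ appearing on a length-$\ell$ walk from $a$ to $b$ satisfies $d(a,x_i)+d(x_i,b) \geq d(a,b) = \ell$, so $\len(a,x_i,b) \geq \ell$ and the subcomplex $K'_\ell(a,b)$ is empty; relative chains thus coincide with absolute ones. The differential $\partial_{2,\ell}$ now sends every generator $(a,x,b) \in MC_{2,\ell}(a,b)$ to $-(a,b) \in MC_{1,\ell}(a,b) \cong \mathbb{Z}$, so under the identification $C_0(K_\ell(a,b)) \cong MC_{2,\ell}(a,b)$ it corresponds, up to sign, to the augmentation $\varepsilon \colon C_0(K_\ell(a,b)) \to \mathbb{Z}$. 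Hence $\ker \partial_{2,\ell} \cong \ker \varepsilon$, and passing to the quotient by $\imm \partial_{3,\ell} \cong \imm \partial_1$ gives $MH_{2,\ell}(a,b) \cong \tilde{H}_0(K_\ell(a,b))$.

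The main obstacle is the bookkeeping at the bottom of the chain complex in the $d(a,b) = \ell$ case: one must verify that the augmentation of $K_\ell(a,b)$ genuinely corresponds to $\partial_{2,\ell}$ under the theorem's identification (with signs handled correctly by the $-\partial$ convention), and that vertices of $K_\ell(a,b)$ bijectively match generators of $MC_{2,\ell}(a,b)$, which hinges on the observation that when $d(a,x) + d(x,b) = \ell$ the position $i$ at which $x$ appears in a length-$\ell$ walk from $a$ to $b$ is uniquely determined by $i = d(a,x)$.
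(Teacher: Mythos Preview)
The paper does not supply its own proof of this corollary: it is recorded verbatim from Asao--Izumihara \cite[Cor.~4.4]{asao2021geometric} and immediately followed by a remark, with no argument in between. Your proposal is therefore not being compared against anything in the present paper, but it does reproduce what is the standard (and essentially the only) way to deduce the corollary from the chain-level isomorphism of the preceding theorem: the $k\ge 3$ case is a direct reindexing, and for $k=2$ you correctly split on $d(a,b)$, observing that $MC_{1,\ell}(a,b)$ vanishes when $d(a,b)<\ell$, while when $d(a,b)=\ell$ the subcomplex $K'_\ell(a,b)$ is empty and $\partial_{2,\ell}$ becomes the augmentation. Your closing remark about the bijection between vertices of $K_\ell(a,b)$ and generators of $MC_{2,\ell}(a,b)$ via $i=d(a,x)$ is exactly the detail one needs to make the $d(a,b)=\ell$ case airtight. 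The argument is correct.
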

	
	\begin{remark}
		Notice while both $K_{\ell -1}(a,b)$ and $K'_{\ell}(a,b)$ are subcomplexes of $K_{\ell}(a,b)$, in general $K_{\ell -1}(a,b) \subsetneq K'_{\ell}(a,b)$.
		Indeed, say $v$ and $u$ are two adjacent vertices, then the tuple $(v,u,u)$ is an element of both $K_3(v,u)$ and $K'_3(v,u)$ because it is a subtuple of $(v,u,v,u)$, but it cannot be in $K_2(v,u)$.
		This type of example with consecutively repeated vertices is the only one that can be constructed to show that $K_{\ell-1}(a,b)$ is a proper subset of $K'_\ell(a,b)$, and in the context of eulerian magnitude homology it cannot arise because the tuples have all different vertices.
		Therefore when introducing the eulerian Asao-Izumihara complex it will possible to only rely on the (eulerian versions of the) complexes $K_{\ell}(a,b)$ and $K_{\ell -1}(a,b)$.
	\end{remark}
	
	\begin{definition}
		\label{def:ET(a,b)}
		Let $\ET(a, b)$ be the set of eulerian trails from $a$ to $b$ with length smaller than $\ell$. 
		That is, the set of all trails $(x_1, \dots, x_t)\in V^{t+1}$ such that $x_i \neq x_j$ for every $i, j \in \{1,\dots,t\}$ and 
		\[
		\len(a,x_1, \dots, x_t,b) \le \ell.
		\]
	\end{definition}
	
	The set $\ET(a, b)$ is clearly a simplicial complex, and the complex $\ETsub(a, b)$ is a subcomplex of $\ET(a, b)$, see Figure \ref{fig:eulerian AI} for an illustration.
	
	\begin{example}
		\sloppy
		Consider the same graph $G$ as in example \ref{ex:toyexampleEMH}.
		Suppose we choose $(a,b)=(0,4)$ and $\ell =4$.
		Then we have $ET_4(0,4)=\{(1,2,3),(1,2),(1,3),(2,3),(1),(2),(3) \}$ and $ET_3(0,4)=\{(1,2),(2,3),(1),(2),(3) \}$.
		
		\begin{figure}[h]
			\centering
			\begin{tikzpicture}[node distance={20mm}, thick, main/.style = {draw, circle}]
				\node[main,red,fill=white] (1) {$1$};   
				\node[main,red,fill=white] (3) [above right of=1] {$3$};
				\node[main,red,fill=white] (2) [below right of=3] {$2$};
				\draw[red] (1) -- (2);
				\draw (1) -- (3);
				\draw[red] (2) -- (3);
				
				\begin{scope}[on background layer]
					\fill[gray!30] (1.center)--(3.center)--(2.center);
				\end{scope}
				
			\end{tikzpicture} 
			
			\caption{The geometric realization of $ET_{\leq 4}(0,4)$ and $ET_{\leq 3}(0,4)$: $ET_{\leq 4}(0,4)$ is the full triangle, while $ET_{\leq 3}(0,4)$ is the subcomplex represented in red.}
			\label{fig:eulerian AI}
		\end{figure}
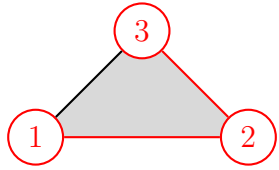
	\end{example}
	
	The following two results can be shown proceeding similarly to the proofs of \cite[Thm. 4.3 and Cor. 4.4]{asao2021geometric}.
	
	\begin{theorem}
		\label{thm:asao-izu isomorphism thm}
		
		Let $a, b$ be vertices of a graph $G$, and fix an integer  $\ell\ge 3$. 
		Then we can construct a pair of simplicial complexes $(\ET(a, b),\ETsub(a, b))$ which satisfies
		\[
		C_{\ast -2}(\ET(a, b), \ETsub(a, b)) \cong EMC_{\ast,\ell}(a, b).
		\]
	\end{theorem}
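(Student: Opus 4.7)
The plan is to follow the scheme of \cite[Thm. 4.3]{asao2021geometric}, exploiting the simplification highlighted in the preceding remark: in the eulerian setting the obstruction to the inclusion $K_{\ell-1}(a,b) \subsetneq K'_{\ell}(a,b)$ disappears, so it suffices to quotient by $\ETsub(a,b)$ directly without introducing an analogue of $K'_{\ell}$.

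First I would identify the generators of the relative chain group $C_k(\ET(a,b), \ETsub(a,b))$. These are precisely the $k$-simplices of $\ET(a,b)$ not lying in $\ETsub(a,b)$: tuples $(x_1, \dots, x_{k+1})$ of pairwise distinct vertices, all distinct from $a$ and $b$, with $\len(a, x_1, \dots, x_{k+1}, b) = \ell$. On the other side, generators of $EMC_{k+2, \ell}(a,b)$ are $(k+2)$-trails $(a, y_1, \dots, y_{k+1}, b)$ with all $k+3$ vertices distinct and length exactly $\ell$. The $\mathbb{Z}$-linear map
\[
\phi_k \colon C_k(\ET(a,b), \ETsub(a,b)) \longrightarrow EMC_{k+2, \ell}(a,b), \qquad (x_1, \dots, x_{k+1}) \longmapsto (a, x_1, \dots, x_{k+1}, b)
\]
is manifestly a bijection on generators, hence an isomorphism of free abelian groups in each degree.

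Next I would verify that $\phi_\ast$ intertwines the differentials. The simplicial boundary of a generator $(x_1, \dots, x_{k+1})$ in the quotient $\ET(a,b) / \ETsub(a,b)$ is the signed sum of face-omissions $(x_1, \dots, \hat{x_i}, \dots, x_{k+1})$, with any face lying in $\ETsub(a,b)$ set to zero. On the $EMC$ side, $\partial_{k+2, \ell}(a, x_1, \dots, x_{k+1}, b)$ is the signed sum of the same face-omissions (bookended with $a$ and $b$), with each term vanishing when removing $x_i$ strictly shortens the total length below $\ell$. The key observation is that both vanishing criteria coincide: a face of $(x_1, \dots, x_{k+1})$ lies in $\ETsub(a,b)$ exactly when the associated bookended trail has length strictly less than $\ell$, since the length function appearing in Definition \ref{def:ET(a,b)} is the very same one used in Definition \ref{def:differential}. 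Hence the same subset of summands survives on both sides, and a uniform reindexing reconciles the $(-1)^{i-1}$ versus $(-1)^i$ sign conventions.

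The principal technical obstacle is the careful bookkeeping of signs and vanishing conditions, which must be checked index by index, but no genuinely new ideas are required beyond those of Asao and Izumihara. What deserves emphasis is the role of the preceding remark: the distinctness of landmarks in the eulerian setting rules out tuples such as $(v, u, u)$ responsible for the strict containment $K_{\ell-1}(a,b) \subsetneq K'_{\ell}(a,b)$ in the standard theory, and this is precisely what allows $\ETsub(a,b)$ to play the role of $K'_{\ell}(a,b)$ without further adjustment.
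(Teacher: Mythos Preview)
Your proposal is correct and follows precisely the approach the paper indicates: the paper does not give its own proof but states that one proceeds ``similarly to the proofs of \cite[Thm.~4.3 and Cor.~4.4]{asao2021geometric}'', which is exactly what you do, exploiting the remark that in the eulerian setting $\ETsub(a,b)$ already plays the role of $K'_{\ell}(a,b)$. Your treatment of the bijection on generators, the matching of vanishing conditions, and the global sign discrepancy is accurate and complete.
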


	\begin{corollary}
		\label{cor:homology congruence}
		Let $\ell \geq 3$. Then
		\[
		EMH_{k, \ell}(a, b)\cong H_{k-2}(\ET(a, b), \ETsub(a, b))
		\]
		Moreover, for $k = 2$, we also have
		\[
		EMH_{2, \ell}(a, b) \cong \begin{cases}H_{0}(\ET(a, b), \ETsub(a, b)) & \text{ if } d(a, b) < \ell, \\ \tilde{H}_{0}(\ET(a, b))&\text{ if } d(a, b) = \ell,\end{cases}
		\]
		where $\tilde{H}_{\ast}$ denotes the reduced homology group.
	\end{corollary}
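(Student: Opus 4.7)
The plan is to deduce the corollary directly from Theorem \ref{thm:asao-izu isomorphism thm} by taking homology of the chain-level isomorphism, treating the low-degree case $k=2$ separately to account for the degree shift.

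For $k \ge 3$, the argument is a straightforward transport of homology along the given chain isomorphism. Specifically, Theorem \ref{thm:asao-izu isomorphism thm} supplies $C_{\ast-2}(\ET(a,b), \ETsub(a,b)) \cong EMC_{\ast,\ell}(a,b)$ as chain complexes, and this isomorphism is unambiguous in every strictly positive relative degree. Taking $H_k$ on the right and $H_{k-2}$ on the left yields $EMH_{k,\ell}(a,b) \cong H_{k-2}(\ET(a,b), \ETsub(a,b))$, with no further subtleties.

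The case $k=2$ corresponds to relative homology in degree $0$, where the behavior depends on whether the empty simplex lies in the subcomplex. By Definition \ref{def:ET(a,b)}, the empty tuple $\emptyset$ is admissible in $\ETsub(a,b)$ precisely when $\len(a,b) = d(a,b) \le \ell - 1$, i.e.\ when $d(a,b) < \ell$. Accordingly I would split the argument into two subcases. When $d(a,b) < \ell$, the subcomplex $\ETsub(a,b)$ is non-empty (it contains $\emptyset$) and the standard relative $H_0$ conventions give $EMH_{2,\ell}(a,b) \cong H_0(\ET(a,b), \ETsub(a,b))$. When $d(a,b) = \ell$, no trail from $a$ to $b$ has length strictly less than $\ell$, so $\ETsub(a,b) = \emptyset$ as a simplicial complex; the relative $H_0$ of $(\ET(a,b), \emptyset)$ is then identified with $\tilde H_0(\ET(a,b))$ via the usual augmentation, producing the second branch of the displayed formula.

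The main obstacle is the bookkeeping at the chain level in low degree: one must verify that the shift-by-two isomorphism of Theorem \ref{thm:asao-izu isomorphism thm} is compatible with the appropriate augmentation. Concretely, I would check that under the identification a relative $0$-simplex $(x_1)$ corresponds to the generator $(a, x_1, b) \in EMC_{2,\ell}(a,b)$ exactly when $\len(a,x_1,b) = \ell$, and that its relative boundary agrees with $\partial_{2,\ell}$ applied to $(a,x_1,b)$ (namely sending it to $(a,b)$ if $d(a,b) = \ell$, and to zero otherwise, consistent with the convention in Definition \ref{def:differential}). Once this compatibility is in place, the two formulas for $EMH_{2,\ell}(a,b)$ follow exactly as in \cite[Cor. 4.4]{asao2021geometric}.
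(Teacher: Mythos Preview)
Your proposal is correct and follows essentially the same approach the paper indicates: the paper does not give an explicit proof of this corollary but states that it ``can be shown proceeding similarly to the proofs of \cite[Thm.~4.3 and Cor.~4.4]{asao2021geometric},'' which is precisely what you do---take homology of the chain isomorphism from Theorem~\ref{thm:asao-izu isomorphism thm} and handle the $k=2$ augmentation bookkeeping via the case split on $d(a,b)$. Your treatment of the low-degree case, identifying when $\ETsub(a,b)$ is empty and tracking the augmentation accordingly, is exactly the eulerian analogue of the argument in \cite{asao2021geometric}.
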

	
	\section{Torsion in EMH of Erd\H{o}s-R\'{e}nyi random graphs}
	\label{sec:homotopy type EMC}
	
	In this section we investigate the regimes where the eulerian magnitude homology of Erd\H{o}s-R\'{e}nyi random graphs is torision free.
	
	Recall that the \emph{Erd\H{o}s-R\'{e}nyi (ER) model} for random graphs, denoted as $G(n,p)$ and first introduced in~\cite{erdHos1960evolution}, is one of the most extensively studied and utilized models for random graphs.
	This model represents the maximum entropy distribution for graphs with a given expected edge proportion, making it a valuable null model across a wide array of scientific and engineering fields. Consequently, the clique complexes of ER graphs have garnered significant interest within the stochastic topology community~\cite{kahle2009topology,kahle2011random,kahle2013limit}.
	
	\begin{definition}
		The \emph{Erd\H{o}s-R\'{e}nyi (ER) model} 
		$G(n, p) = (\Omega, P)$ is the probability space where $\Omega$ is the discrete space of all graphs on $n$ vertices, and
		$P$ is the probability measure that assigns to each graph $G \in \Omega$ with $m$ edges probability
		\[
		P(G)=p^m(1-p)^{{n \choose 2}-m}.
		\]
	\end{definition}
	
	We can sample an ER graph $G \sim G(n, p)$ on $n$ vertices with parameter $p\in [0,1]$ by determining whether each of the $n \choose 2$ potential edges is present via independent draws from a Bernoulli distribution with probability $p$.
	In order to study the limiting behavior of these models as $n \to \infty$, it is often useful to change variables so that $p$ is a function of $n$.
	Here we will take $p=n^{-\alpha}$, $\alpha \in [0,\infty)$, as in~\cite{giusti2024eulerian}.
	
	We will first prove in Section~\ref{subsec:homotopy type AI complex} that, under certain assumptions, the complex $\ET(a,b)$ is shellable for every choice for $\ell \geq 3$.
	This will imply that $H_{\ast}(\ET(a,b),\ETsub(a,b))$ is torsion free, and by Corollary \ref{cor:homology congruence} that $EMH_{\ast+2,\ell}(G)$ is torsion free.
	
	\subsection{Homotopy type of the eulerian Asao-Izumihara complex}
	\label{subsec:homotopy type AI complex}
	
	\sloppy
	Recall from Section~\ref{sec:A-I for EMH} that the eulerian Asao-Izumihara chain complex is the relative complex $C_{\ast}(\ET(a,b) , \ETsub(a,b))$, where $\ET(a,b)$ is the set of eulerian tuples $(x_0\dots,x_k)$ such that $\len(a,x_0,\dots,x_k,b) \leq \ell$, and $\ETsub(a,b)$ is defined similarly.
	Fix and integer $\ell \geq 3$. 
	
	\begin{theorem}
		\label{thm:ET(a,b) shellable}
		Let $G(n,n^{-\alpha})$ be an ER graph.
		Suppose the facets $f_1,\dots,f_{t-1},f_t$ of $\ET(a,b)$ are ordered in decreasing dimension. 
		Then as $n\to \infty$ $ET_{\leq \ell}(a,b)$ is shellable asymptotically almost surely when	
		\begin{itemize}
			\item $0< \alpha < \prod_{i=1}^{t-1} \frac{\dim f_i + \dim f_{i+1}}{\ell +2\dim f_{i+1} -2}$, if $\dim f_1  < \frac{\ell -2}{2}$,
			\item  $0<\alpha< \prod_{i=1}^{k-1} \frac{\dim f_i +3}{\ell +4} \prod_{i=k}^{t-1} \frac{\dim f_i + \dim f_{i+1}}{\ell +2\dim f_{i+1} -2}$, if $\dim f_i \geq \frac{\ell -2}{2}$ for $1 \leq i \leq k-1$ and $\dim f_i < \frac{\ell -2}{2}$ for $i \geq k$.
		\end{itemize}
		
	\end{theorem}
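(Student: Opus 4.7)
The plan is to verify the shellability condition from Definition~\ref{def:shellable} facet by facet. By Lemma~\ref{lem:structure nonpure shelling}, I can enumerate the facets $f_1,\dots,f_t$ of $\ET(a,b)$ in order of decreasing dimension without loss of generality. For each $k\geq 2$ it then suffices to show that the intersection $f_k\cap\bigcup_{i<k}f_i$ is a non-empty union of codimension-one faces of $f_k$; concretely, for every vertex $x$ whose removal from the trail underlying $f_k$ preserves an eulerian trail from $a$ to $b$ of length at most $\ell$, I need to exhibit an earlier facet $f_j$ that contains $f_k\setminus\{x\}$. The existence of such a partner facet is equivalent to the existence in $G(n,n^{-\alpha})$ of a specific ``glued'' subgraph, which reduces the shellability question to a collection of subgraph-existence questions in the Erd\H{o}s--R\'enyi model.

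Writing $d_i:=\dim f_i$, each facet $f_i$ corresponds to an eulerian walk $(a,x_0,\dots,x_{d_i},b)$ of length at most $\ell$. Producing a codimension-one face of $f_{i+1}$ inside an earlier facet $f_i$ requires the graph to contain two overlapping trails sharing $d_{i+1}$ intermediate landmarks of $f_{i+1}$. A careful vertex-and-edge count of the resulting glued subgraph $H_{i,i+1}$ should yield $v(H_{i,i+1})=d_i+d_{i+1}$ and $e(H_{i,i+1})=\ell+2d_{i+1}-2$ in the \emph{short-facet} regime $d_i<(\ell-2)/2$. In the \emph{long-facet} regime $d_i\ge(\ell-2)/2$, the length budget of $\ell$ forces a tighter overlap pattern between consecutive partner trails, replacing the counts by $v(H_{i,i+1})=d_i+3$ and $e(H_{i,i+1})=\ell+4$. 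This accounts both for the case split at $(\ell-2)/2$ and for the two shapes of factor appearing in the theorem.

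By standard first- and second-moment estimates in the Erd\H{o}s--R\'enyi model, a fixed balanced connected subgraph $H$ appears in $G(n,n^{-\alpha})$ asymptotically almost surely as soon as $\alpha<v(H)/e(H)$. Applying this threshold at each of the $t-1$ gluing steps and combining the resulting asymptotic estimates into a single joint event then yields the stated product bound on $\alpha$ in either regime. The hardest part of the argument is the combinatorial step above: precisely identifying which vertex one must delete from $f_{i+1}$ and which partner trail must exist in each dimension regime, and in particular verifying that $(\ell-2)/2$ is exactly the threshold at which the length budget forces the partner trails to share a longer common segment rather than meeting at a single pivot. Once the edge and vertex counts of $H_{i,i+1}$ are correctly pinned down in each regime, the probabilistic threshold computation and the final union bound over the $t-1$ attachments are routine.
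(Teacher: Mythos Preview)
Your overall strategy---order the facets by decreasing dimension and, for each new facet, show that suitable bridging configurations exist in $G(n,n^{-\alpha})$ with high probability---matches the paper's, and the vertex and edge counts you quote in the two regimes do reproduce the paper's thresholds numerically. But your description of the combinatorial step is missing the key construction, and your explanation of the dichotomy at $(\ell-2)/2$ is not the correct mechanism.

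For two facets differing in $m\ge 2$ vertices, it is not enough to exhibit a single partner facet sharing a codimension-one face. The paper instead builds an entire \emph{chain} of intermediate facets $f_1',f_2',\dots,f_m'$, each obtained from its predecessor by swapping exactly one vertex, so that the whole chain forms a shelling; the extra edges needed to realise these intermediate trails in $G$ (one fewer than the size of each block of adjacent differing vertices) are what give the edge count $\ell+2m$. Since $m$ is not fixed a priori, the paper sums the expected number of such configurations over $2\le m\le d'-1$,
\[
\sum_{m=2}^{d'-1}\binom{n}{d+1+m}\,n^{-\alpha(\ell+2m)},
\]
and the split at $(\ell-2)/2$ is purely a matter of which term dominates this sum: for $\alpha>\tfrac12$ the $m=2$ term dominates (giving $\frac{d+3}{\ell+4}$), for $\alpha<\tfrac12$ the $m=d'-1$ term dominates (giving $\frac{d+d'}{\ell+2d'-2}$), and $(\ell-2)/2$ is exactly where each of these ratios crosses~$\tfrac12$. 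Your single ``glued subgraph'' $H_{i,i+1}$ and your ``length-budget'' explanation for the regime change do not capture this; without the chain-of-intermediate-facets construction and the sum over $m$, the argument as you have sketched it will not go through.
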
 
	
	\begin{proof}
		Consider the facets $f_1,\dots,f_t$ of $\ET(a,b)$.
		Suppose they are ordered in decreasing dimension and say $\dim f_1=d$.
		There are some cases we need to consider.
		\begin{enumerate}
			\item If there is a single facet $f_1$, then $\ET(a,b)$ is homotopic to a sphere $S^{d-1}$ with $d=\dim f_1$ and we are done.
			\item Say there are two different maximal facets, $f_1$ and $f_2$ and suppose they have the same dimension $d$.\\
			If $f_1$ and $f_2$ differ in one vertex, then they intersect in a $(d-1)$-face, and thus $\{f_1,f_2\}$ is a shelling.\\		 	
			If $f_1$ and $f_2$ differ in two vertices $u,v$, then we need to distiguish the situations when $u$ and $v$ are adjacent and when they are not.
			\begin{enumerate}
				\item If $u$ and $v$ are not adjacent, then we will have $f_1=(a,\dots,u,\dots,v,\dots,b)$ and $f_2=(a,\dots,u',\dots,v',\dots,b)$, and by construction there exists a third facet $f_3=(a,\dots,u',\dots,v,\dots,b)$ such that $\{f_1,f_3,f_2\}$ is a shelling, see Figure~\ref{fig:shelling two non adjancet vertices}.
				
				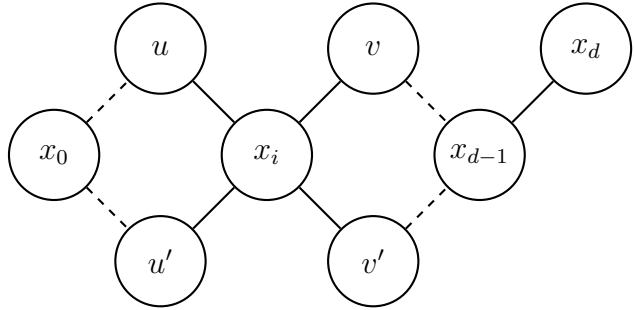
\begin{figure}[h]
					\centering
					\begin{tikzpicture}[node distance={20mm}, thick, main/.style = {draw, circle}, minimum size=1.2cm]
						
						\node[main] (0) {$x_0$}; 
						\node[main] (1) [above right of=0] {$u$}; 
						\node[main] (1')[below right of=0] {$u'$};
						\node[main] (2) [below right of=1] {$x_i$};
						\node[main] (3) [above right of=2] {$v$};
						\node[main] (3') [below right of=2] {$v'$};
						\node[main] (4) [below right of=3] {$x_{d-1}$};
						\node[main] (5) [above right of=4] {$x_d$};
						
						\draw[dashed] (0) -- (1);
						\draw[dashed] (0) -- (1');
						
						\draw (1) -- (2);
						\draw (1') -- (2);
						
						\draw (2) -- (3);
						\draw (2) -- (3');
						
						\draw[dashed] (3) -- (4);
						\draw[dashed] (3') -- (4);
						
						\draw (4) -- (5);
						
					\end{tikzpicture} 
					\caption{In this example $f_1=(x_0\dots,u,x_i,v,\dots,x_d)$ and $f_2=(x_0\dots,u',x_i,v',\dots,x_d)$. We can define $f_3=(x_0\dots,u',x_i,v,\dots,x_d)$, so that $\{f_1,f_3,f_2\}$ is a shelling.}
					\label{fig:shelling two non adjancet vertices}
				\end{figure}

				\item If  $u$ and $v$ are adjacent, then in order to construct a facet $f_3$ intersecting $f_1$ in a $(d-1)$-face we need either the edge $(u,v')$ or the edge $(u',v)$ to be present (see Figure~\ref{fig:shelling two adjancet vertices}), and this happens with probability $p=n^{-\alpha}$.
			\end{enumerate}
			
			\begin{figure}[h]
				\centering
				\begin{tikzpicture}[node distance={20mm}, thick, main/.style = {draw, circle}, minimum size=1.2cm]
					
					\node[main] (0) {$x_0$}; 
					\node[main] (1) [above right of=0] {$u$}; 
					\node[main] (1')[below right of=0] {$u'$};
					\node[main] (3) [right of=1] {$v$};
					\node[main] (3') [right of=1'] {$v'$};
					\node[main] (4) [below right of=3] {$x_{d-1}$};
					\node[main] (5) [right of=4] {$x_d$};
					
					\draw[dashed] (0) -- (1);
					\draw[dashed] (0) -- (1');
					
					\draw (1) -- (3);
					\draw (1') -- (3');
					
					\draw[dashed, red] (1) -- (3');
					\draw[dashed, red] (1') -- (3);
					
					\draw[dashed] (3) -- (4);
					\draw[dashed] (3') -- (4);
					
					\draw (4) -- (5);
					
				\end{tikzpicture} 
				\caption{In this example $f_1=(x_0\dots,u,v,\dots,x_d)$ and $f_2=(x_0\dots,u',v',\dots,x_d)$. In case one of the two dotted red edges $(u,v')$ and $(u',v)$ is present we can define $f_3=(x_0\dots,u',v,\dots,x_d)$, or $f_4=(x_0\dots,u,v',\dots,x_d)$, so that $\{f_1,f_3,f_2\}$ or $\{f_1,f_4,f_2\}$ is a shelling.}
				\label{fig:shelling two adjancet vertices}
			\end{figure}
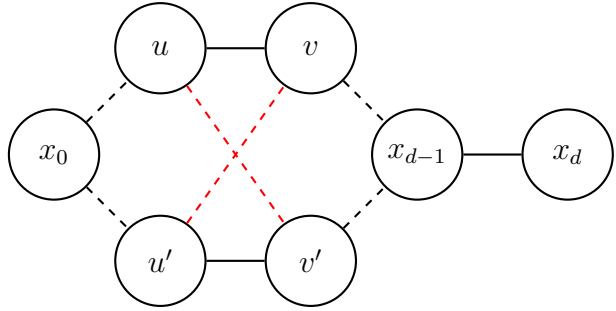

			\noindent Now say $f_1$ and $f_2$ differ in $m$ vertices and, indicating the facets $f_1$ and $f_2$ only by the vertices they differ in,  write $f_1=(u_1,u_2,\dots,u_m)$ and $f_2=(u_1',u_2',\dots,u_m')$.
			Define a partition $A_i$ with $\bigcup_{i}A_i=\{u_1,\dots,u_m\}$ such that two vertices $u_{\alpha}^{i}, u_{\beta}^{i}$ belong to the same set $A_i$ if and only if they are adjacent in $G$, see Figure~\ref{fig:shelling any adjancet vertices}.
			Call $A'_i$ the corresponding partition for the vertices $(u_1',u_2',\dots,u_m')$.
			Notice that $|A_i|=|A'_i|$ for every $i$. 
			Indeed, suppose by contradiction this is not true.
			Then, because $f_1$ and $f_2$ have the same dimension, there exists $i_1,i_2$ such that $|A_{i_1}| > |A_{i_1}'|$ and $|A_{i_2}| < |A_{i_2}'|$.
			But then it is possible to construct a $f_3$ visiting vertices from $A_{i_1}$ and $A_{i_2}'$ thus having $\dim f_3 > \dim f_1,\dim f_2$, contradicting the fact that $f_1$ and $f_2$ are maximal facets.
			
			Then in this case we need for every set of adjacent vertices $A_i$ and $A'_i$ a number $|A_i|-1$ of edges $(u_{\alpha}^i, u_{\beta}^{\prime, i})$, $\alpha \neq \beta$, in order to create a shelling.
			Indeed, we need to be able to construct a sequence of facets $f'_{1},\dots,f'_m$ by changing one vertex each time so that the intersection between the $j$-th facet and the preceding $(j-1)$ facets is a $(d -1)$- dimensional simplex, see Figure~\ref{fig:shelling any adjancet vertices}.  
			Given the fact that we also require for every set $A_i$ a number $|A_i|+1$ of edges to connect the vertices in $A_i$, we obtain that the probability of all the required edges existing is
			\[
			p^{\ell + \sum_{i}(|A_i|+1) + \sum_{i}(|A_i|-1)} =  p^{\ell +2m}.
			\] 
			With $p=n^{-\alpha}$, $\alpha \in [1/2,\infty)$, we get  
			\begin{align*}
				&\sum_{m=2}^{d-1} \binom{n}{d+1+m} n^{-\alpha (\ell + 2m)} \leq \\
				&(d-2) \binom{n}{d+3} n^{-\alpha (\ell + 4)} \sim \\
				& (d-2) \frac{n^{d+3}}{(d+3)!} n^{-\alpha (\ell + 4)}
				\xrightarrow{n\to \infty} \begin{cases}
					0, \text{ if } \alpha > \frac{d+3}{\ell +4} \\
					\infty, \text{ if } \alpha < \frac{d+3}{\ell +4}.
				\end{cases}
			\end{align*}
			
			Notice that we assumed $\alpha \in [1/2,\infty)$ and $\frac{d+3}{\ell +4} \geq \frac{1}{2}$ only when $d \geq \frac{\ell -2}{2}$.

			With $p=n^{-\alpha}$, $\alpha \in [0,1/2)$, we get  
			\begin{align*}
				&\sum_{m=2}^{d-1} \binom{n}{d+1+m} n^{-\alpha (\ell + 2m)} \leq \\
				&(d-2) \binom{n}{2d} n^{-\alpha (\ell + 2d -2)} \sim \\
				& (d-2) \frac{n^{2d}}{(2d)!} n^{-\alpha (\ell + 2d -2)}
				\xrightarrow{n\to \infty} \begin{cases}
					0, \text{ if } \alpha > \frac{2d}{\ell + 2d -2} \\
					\infty, \text{ if } 0<\alpha < \frac{2d}{\ell + 2d -2}.
				\end{cases}
			\end{align*}
			
			Since it holds also in this case that $\frac{2d}{\ell + 2d -2} \geq \frac{1}{2}$ if and only if $d \geq  \frac{\ell -2}{2}$, we can conclude that we can construct a shelling when
			\[
			\begin{cases}
				0< \alpha < \frac{d+3}{\ell +4}, \text{ if } d\geq \frac{\ell -2}{2} \\
				0<\alpha< \frac{2d}{\ell +2d -2}, \text{ if } d< \frac{\ell -2}{2}.
			\end{cases}
			\]
			
			\begin{figure}[h]
				\centering
				\begin{tikzpicture}[node distance={20mm}, thick, main/.style = {draw, circle}, minimum size=1cm]
					
					\node[main] (0) {$x_0$}; 
					\node[main] (u1) [above right of=0] {$u_1$}; 
					\node[main] (u1')[below right of=0] {$u_1'$};
					\node[main] (u2) [right of=u1] {$u_2$};
					\node[main] (u2') [right of=u1'] {$u_2'$};
					\node[main] (i) [below right of=u2] {$x_i$};
					\node[main] (u3) [above right of=i] {$u_3$}; 
					\node[main] (u3')[below right of=i] {$u_3'$};
					\node[main] (u4) [right of=u3] {$u_4$};
					\node[main] (u4') [right of=u3'] {$u_4'$};
					\node[main] (u5) [right of=u4] {$u_5$};
					\node[main] (u5') [right of=u4'] {$u_5'$};
					\node[main] (d) [below right of=u5] {$x_d$};
					
					\draw[dashed] (0) -- (u1);
					\draw[dashed] (0) -- (u1');
					
					\draw (u1) -- (u2);
					\draw (u1') -- (u2');
					
					\draw (u2) -- (i);
					\draw (u2') -- (i);
					
					\draw[dashed] (i) -- (u3);
					\draw[dashed] (i) -- (u3');
					
					\draw (u3) -- (u4);
					\draw (u3') -- (u4');
					\draw (u4) -- (u5);
					\draw (u4') -- (u5');
					
					\draw[dashed] (u5) -- (d);
					\draw[dashed] (u5') -- (d);
					
					\draw[dashed, red] (u1) -- (u2');
					\draw[dashed, red] (u3) -- (u4');
					\draw[dashed, red] (u4') -- (u5);
					
				\end{tikzpicture} 
				\caption{\sloppy In this example $A_1=\{u_1,u_2\}$ and $A_2=\{u_3,u_4,u_5\}$. Indicating the facets $f_1$ and $f_2$ only by the vertices they differ in we have $f_1=(u_1,u_2,u_3,u_4,u_5)$ and $f_2=(u_1',u_2',u_3',u_4',u_5')$. In case all the dotted red edges are present, then we can define
					$f_3=(u_1,u_2',u_3,u_4,u_5)$,  $f_4=(u_1,u_2',u_3,u_4',u_5)$, $f_5=(u_1,u_2',u_3,u_4',u_5')$ and $f_6=(u_1,u_2',u_3',u_4',u_5')$ 
					such that $\{f_1,f_3,f_4,f_5,f_6,f_2\}$ is a shelling.}
				\label{fig:shelling any adjancet vertices}
			\end{figure}
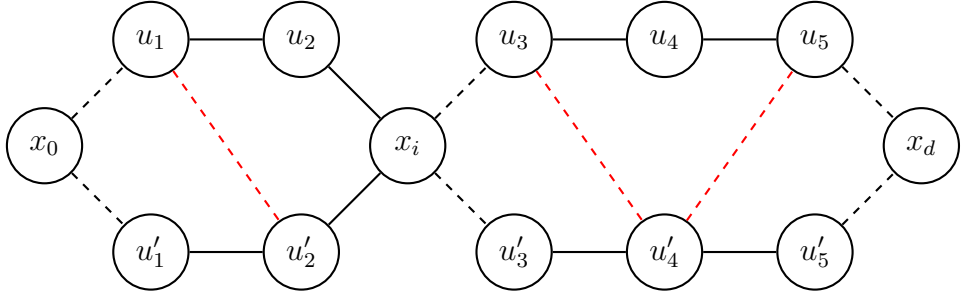
			
			\item Suppose now there are two different facets, $f_1$ and $f_2$, and suppose $\dim f_2 < \dim f_1$.

			Let $\dim f_2 = d'\leq d-1$.
			Following the structure theorem for non-pure shellable complexes provided by Lemma~\ref{lem:structure nonpure shelling} and Theorem~\ref{thm:structure nonpure shelling}, in order to produce a shelling we need to extend the $(d')$-skeleton of $f_1$ to $f_2$ by constructing a sequence of $(d')$-dimensional facets $f'_{1},\dots,f'_m$ by changing one vertex each time so that the intersection between the $j$-th facet and the preceding $(j-1)$ facets is a $(d'-1)$-dimensional simplex.
			
			If the simplices in the $(d')$-skeleton of $f_1$ and $f_2$ differ in $m \leq d'-1$ vertices, constructing such sequence is possible if we can find $\ell +2m$ edges joining the vertices in which $f_1$ and $f_2$ differ. 
			This happens with probability $p^{\ell +2m}$ and therefore following the computations done in the previous point we get, for $p = n^{-\alpha}$ and $\alpha \in [1/2, \infty)$,
			\begin{align*}
				&\sum_{m=2}^{d'-1} \binom{n}{d+1+m} n^{-\alpha (\ell + 2m)} \leq \\
				&(d-3) \binom{n}{d+3} n^{-\alpha (\ell + 4)} \sim \\
				&(d-3) \frac{n^{d+3}}{(d+3)!} n^{-\alpha (\ell + 4)}
				\xrightarrow{n\to \infty} \begin{cases}
					0, \text{ if } \alpha > \frac{d+3}{\ell +4} \\
					\infty, \text{ if } \frac{1}{2}<\alpha < \frac{d+3}{\ell +4}.
				\end{cases}
			\end{align*}

			With $p=n^{-\alpha}$, $\alpha \in [0,1/2)$, we get  
			\begin{align*}
				&\sum_{m=2}^{d'-1} \binom{n}{d+1+m} n^{-\alpha (\ell + 2m)} \leq \\
				&(d-3) \binom{n}{d+d'} n^{-\alpha (\ell + 2(d'-1))} \sim \\
				& (d-3) \frac{n^{d+d'}}{(d+d')!} n^{-\alpha (\ell + 2d' -2)}
				\xrightarrow{n\to \infty} \begin{cases}
					0, \text{ if } \alpha > \frac{d+d'}{\ell + 2d' -2} \\
					\infty, \text{ if } 0<\alpha < \frac{d+d'}{\ell + 2d' -2}.
				\end{cases}
			\end{align*}

			Again, from the fact that both inequalities $\frac{d+3}{\ell +4} \geq \frac{1}{2}$ and $\frac{d+d'}{\ell + 2d' -2} \geq \frac{1}{2}$ are true if and only if $d \geq \frac{\ell -2}{2}$, we conclude that we can construct a shelling when
			\[
			\begin{cases}
				0< \alpha < \frac{d+4}{\ell +4}, \text{ if } d\geq \frac{\ell -2}{2} \\
				0<\alpha< \frac{d+d'}{\ell + 2d' -2}, \text{ if } d< \frac{\ell -2}{2}.
			\end{cases}
			\]		
			
			\item Suppose there are $t$ facets $f_1,\dots,f_{t-1},f_t$ ordered in decreasing order with $\dim f_1=d$, then we only need to iterate the observations made in point (3).
			
			That is, at each step $j \in [1,...t-1]$ we have a shelling when
			\[
			\begin{cases}
				0< \alpha < \frac{\dim f_j +3}{\ell +4} , \text{ if } \dim f_j \geq \frac{\ell -2}{2} \\
				0<\alpha< \frac{\dim f_j +\dim f_{j+1}}{\ell + 2\dim f_{j+1} -2}, \text{ if } \dim f_{j}< \frac{\ell -2}{2}.
			\end{cases}
			\]
			
			Therefore, suppose $d=\dim f_1 < \frac{\ell -2}{2}$.
			Then every smaller facet $f_k$ will be such that $\dim f_k < \frac{\ell -2}{2}$ and we will have a shelling when 
			\[\alpha < \prod_{i=1}^{t-1} \frac{\dim f_i + \dim f_{i+1}}{\ell +2\dim f_{i+1} -2}.
			\]

			On the other hand, if $d=\dim f_1 \geq \frac{\ell -2}{2}$ let $f_k$ be the first facet in the sequence $f_1,\dots,f_t$ such that $\dim f_k < \frac{\ell -2}{2}$.
			Then we will have a shelling when 
			\[\alpha < \prod_{i=1}^{k-1} \frac{\dim f_i +3}{\ell +4} \prod_{i=k}^{t-1} \frac{\dim f_i + \dim f_{i+1}}{\ell  +2\dim f_{i+1} -2}.\]
			
		\end{enumerate}
	\end{proof}

	\begin{corollary}
		\label{cor:ETsub shellable}
		Let $G(n,n^{-\alpha})$ be an ER graph.
		Suppose the facets $g_1,\dots,g_{\tau-1},g_{\tau}$ of $\ETsub(a,b)$ are ordered in decreasing dimension.
		Then as $n \to \infty$ $\ETsub(a,b)$ is shellable asymptotically almost surely when
		\begin{itemize}
			\item $0< \alpha < \prod_{i=1}^{\tau-1} \frac{\dim g_i + \dim g_{i+1}}{(\ell -1) +2\dim g_{i+1} -2}$, if $\dim g_1  < \frac{(\ell -1) -2}{2}$,
			\item  $0<\alpha< \prod_{i=1}^{k-1} \frac{\dim g_i +3}{(\ell -1) +4} \prod_{i=k}^{\tau-1} \frac{\dim g_i + \dim g_{i+1}}{(\ell -1) +2\dim g_{i+1} -2}$, if $\dim g_i \geq \frac{(\ell -1) -2}{2}$ for $1 \leq i \leq k-1$ and $\dim g_i < \frac{(\ell -1) -2}{2}$ for $i \geq k$.
		\end{itemize}
		
	\end{corollary}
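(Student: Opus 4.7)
The plan is to observe that $\ETsub(a,b)$ is itself a complex of the form $ET_{\leq L}(a,b)$, just with the length bound $L = \ell - 1$ rather than $L = \ell$. Concretely, by Definition \ref{def:ET(a,b)} a trail $(x_1,\dots,x_t)$ belongs to $\ETsub(a,b)$ precisely when $\len(a,x_1,\dots,x_t,b) \leq \ell - 1$, which is the defining condition of $ET_{\leq (\ell-1)}(a,b)$. So the corollary should follow from Theorem \ref{thm:ET(a,b) shellable} applied to this smaller complex.

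First I would record this identification explicitly as a short lemma (or remark): $\ETsub(a,b) = ET_{\leq (\ell-1)}(a,b)$ as simplicial complexes. Then I would apply Theorem \ref{thm:ET(a,b) shellable} with the parameter $\ell$ replaced throughout by $\ell - 1$, the number of facets $t$ replaced by $\tau$, and the facet labels $f_i$ replaced by $g_i$. Under this relabeling, each bound of the form $\frac{\dim f_i + \dim f_{i+1}}{\ell + 2\dim f_{i+1} - 2}$ becomes $\frac{\dim g_i + \dim g_{i+1}}{(\ell - 1) + 2\dim g_{i+1} - 2}$, and each bound $\frac{\dim f_i + 3}{\ell + 4}$ becomes $\frac{\dim g_i + 3}{(\ell - 1) + 4}$, matching exactly the thresholds in the corollary. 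The case split (single facet vs.\ multiple facets of equal or decreasing dimension, with adjacent vs.\ non-adjacent differing vertices) and the subsequent edge-count asymptotics from the proof of Theorem \ref{thm:ET(a,b) shellable} transfer verbatim, since those arguments depend only on the length bound and on counting edges between the differing vertices of two facets of an eulerian trail complex from $a$ to $b$.

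The only substantive subtlety is that Theorem \ref{thm:ET(a,b) shellable} inherits from Section \ref{sec:A-I for EMH} the standing assumption $\ell \geq 3$, so the substitution $\ell \mapsto \ell - 1$ requires $\ell \geq 4$. I would flag this explicitly, either as an added hypothesis or by verifying separately that $\ETsub(a,b)$ in the boundary case $\ell = 3$ is at most one-dimensional and hence trivially shellable. Beyond this caveat, I do not foresee a real obstacle: the proof is essentially a bookkeeping reparametrization of the preceding theorem, and the thresholds in the statement are precisely what one obtains by performing the $\ell \mapsto \ell - 1$ substitution inside each of the asymptotic regimes $\alpha > \frac{d+3}{\ell+4}$ and $\alpha > \frac{d+d'}{\ell + 2d' - 2}$ already analyzed.
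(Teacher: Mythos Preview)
Your proposal is correct and matches the paper's approach: the corollary is stated without proof immediately after Theorem~\ref{thm:ET(a,b) shellable}, the implicit argument being exactly the substitution $\ell \mapsto \ell-1$ you describe. Your additional remark about the boundary case $\ell=3$ is a nice caveat the paper itself does not address.
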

	
	It was shown in both~\cite{farmer1978cellular} and~\cite{bjorner1983lexicographically} that a shellable simplicial complex has the homotopy type of a wedge of spheres. 
	
	Therefore using Theorem~\ref{thm:ET(a,b) shellable} and Corollary~\ref{cor:ETsub shellable} we can show the following.  
	
	\begin{theorem}
		\label{thm:torsion free}
		Let $G(n,n^{-\alpha})$ be an ER graph.
		For any pair of vertices $(a,b)\in V^2$ consider the eulerian Asao-Izumihara chain complex $C_{\ast -2}(ET_{\leq \ell}(a,b), ET_{\leq \ell-1}(a,b)) \cong EMC_{\ast,\ell}(a, b)$.
		Suppose the facets $f_1,\dots,f_t$ of $ET_{\leq \ell}(a,b)$ and $g_1,\dots,g_{\tau}$ of $ET_{\leq \ell-1}(a,b)$ are ordered in decreasing dimension.
		As $n \to \infty$, in the regimes where both $\ET(a,b)$ and $\ETsub(a,b)$ are shellable, $EMH_{k,\ell}(a,b)$ is torsion free for every $k$.
		
	\end{theorem}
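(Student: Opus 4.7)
The plan is to reduce torsion-freeness of $EMH_{k,\ell}(a,b)$ to torsion-freeness of the relative simplicial homology $H_{k-2}(\ET(a,b), \ETsub(a,b))$ via Corollary~\ref{cor:homology congruence}, and then leverage the shellability of both complexes. By that corollary, for $k \geq 3$ we have $EMH_{k,\ell}(a,b) \cong H_{k-2}(\ET(a,b), \ETsub(a,b))$, while for $k=2$ the target is either the relative $H_0$ or the reduced absolute $\tilde H_0(\ET(a,b))$, both of which are automatically torsion-free. So the real task is to prove torsion-freeness of the relative simplicial homology for $k\ge 3$.

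By Theorem~\ref{thm:ET(a,b) shellable} and Corollary~\ref{cor:ETsub shellable}, in the prescribed $\alpha$-regime both $\ET(a,b)$ and $\ETsub(a,b)$ are shellable asymptotically almost surely as $n\to\infty$. Invoking the theorem that a shellable simplicial complex has the homotopy type of a wedge of spheres (the results of Farmer and of Bj\"orner cited in the excerpt), both $H_{\ast}(\ET(a,b))$ and $H_{\ast}(\ETsub(a,b))$ are then free abelian groups, with ranks in each degree determined by the number of spanning facets of that dimension in the respective shelling. The next step is to pass from freeness of the two absolute homologies to torsion-freeness of the relative homology.

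I would accomplish this by showing that the shellings can be chosen \emph{compatibly}, in the sense that a shelling of $\ET(a,b)$ can be arranged whose initial segment is the shelling of $\ETsub(a,b)$ from Corollary~\ref{cor:ETsub shellable}. This is plausible because $\ETsub(a,b)$ is precisely the subcomplex of $\ET(a,b)$ generated by trails whose extension by $a$ and $b$ has length at most $\ell-1$, and the a.a.s.~edge-existence conditions ensuring shellability in Theorem~\ref{thm:ET(a,b) shellable} are of the same form as those in Corollary~\ref{cor:ETsub shellable}. The key verification is that the asymptotic count of required edges at each shelling step in the proof of Theorem~\ref{thm:ET(a,b) shellable} is not disturbed when the first $\tau$ facets are prescribed to form a shelling of $\ETsub$. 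Granted such a compatible shelling, the pair $(\ET(a,b), \ETsub(a,b))$ becomes a relative CW pair in which the quotient $\ET(a,b)/\ETsub(a,b)$ is itself homotopy equivalent to a wedge of spheres, yielding $H_{\ast}(\ET(a,b), \ETsub(a,b)) \cong \tilde H_{\ast}(\ET(a,b)/\ETsub(a,b))$ free abelian.

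The main obstacle is precisely this compatibility step: the shellings of Theorem~\ref{thm:ET(a,b) shellable} and Corollary~\ref{cor:ETsub shellable} are constructed independently by decreasing-dimension arguments, and certifying that a single shelling of $\ET(a,b)$ extends the chosen shelling of $\ETsub(a,b)$ requires a careful revisit of the combinatorial argument in Theorem~\ref{thm:ET(a,b) shellable}, together with an application of the rearrangement and structure results Lemma~\ref{lem:structure nonpure shelling} and Theorem~\ref{thm:structure nonpure shelling}. As a fallback, one can analyze the long exact sequence of the pair directly and check, using the explicit shelling-induced bases, that the inclusion-induced map $i_{\ast}\colon H_{\ast}(\ETsub(a,b)) \to H_{\ast}(\ET(a,b))$ has both a free kernel and a free cokernel, which via the short exact sequence extracted from the long exact sequence forces $H_{\ast}(\ET(a,b), \ETsub(a,b))$ to be free and hence $EMH_{k,\ell}(a,b)$ to be torsion-free.
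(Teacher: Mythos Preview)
Your fallback route---running the long exact sequence of the pair $(\ET(a,b),\ETsub(a,b))$ and reading off freeness of the relative groups from freeness of the absolute ones---is precisely what the paper does. The paper observes that shellability makes each of $\ET(a,b)$ and $\ETsub(a,b)$ homotopy equivalent to a wedge of spheres, plugs these wedges into the long exact sequence, and concludes that the relative homology is $\mathbb{Z}^{m}$ or $0$ in each degree; it does not attempt anything like your primary proposal of constructing a single shelling of $\ET(a,b)$ that extends a shelling of $\ETsub(a,b)$. That compatible-shelling idea is a genuinely different and more structural argument: if it goes through, the quotient $\ET(a,b)/\ETsub(a,b)$ is itself a wedge of spheres and torsion-freeness is immediate, with no need to analyze the connecting maps. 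The cost, as you correctly anticipate, is the extra combinatorial work of reconciling the two shellings via Lemma~\ref{lem:structure nonpure shelling} and Theorem~\ref{thm:structure nonpure shelling}. One caution on the fallback (and on the paper's own argument): freeness of the absolute homology groups gives freeness of $\ker(i_\ast)$ for free, since subgroups of free abelian groups are free, but freeness of $\mathrm{coker}(i_\ast)$ does \emph{not} follow automatically from a map between free abelian groups; the paper handles this by a terse degree-by-degree case split, and you are right to single it out as the point that actually needs checking.
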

	
	\begin{proof}
		In the regimes where both $\ET(a,b)$ and $\ETsub(a,b)$ are shellable we can assume 
		\[\ET(a,b) \simeq \bigvee_{i=1}^{t} S_i^{n_i} \hspace{1cm}\text{and} \hspace{1cm} \ETsub(a,b) \simeq \bigvee_{j=1}^{{\tau}} S_j^{n_j}.\]
		
		So, $H_k \left(\ET(a,b), \ETsub(a,b)\right) \cong H_k \left( \vee S^{n_i}, \vee S^{n_j}\right)$, and considering the long exact sequence
		\[
		\cdots \to H_k(\vee S^{n_j}) \to H_k(\vee S^{n_i}) \to H_k \left( \vee S^{n_i}, \vee S^{n_j}\right) \to  H_{k-1}(\vee S^{n_j}) \to  \cdots
		\]
		we see that 
		\[
		H_k \left(\ET(a,b), \ETsub(a,b)\right) \cong H_k \left( \vee S^{n_i}, \vee S^{n_j}\right) \cong \begin{cases}
			\mathbb{Z}^{m_i}, \text{ if } k=n_i, \\
			\mathbb{Z}^{m_j}, \text{ if } k=n_j, \\
			0, \text{ otherwise.}
		\end{cases}
		\]
		
		Finally, from the isomorphism theorem~\ref{thm:asao-izu isomorphism thm} proved in~\cite{asao2021geometric}, we can conclude that $EMH_{k,\ell}(a,b)$ is torsion free for every $k$.
	\end{proof}

	Recall that~\cite[Theorem 4.4]{giusti2024eulerian} provides a vanishing threshold for the limiting expected rank of the $(\ell, \ell)$-eulerian magnitude homology in terms of the density parameter in the contexts of Erd\"os-R\'enyi random graphs. 
	
	\begin{theorem}[{\cite[Theorems 4.4]{giusti2024eulerian}}]
		\label{thm:vanishing threshold}
		Let $G = G(n,n^{-\alpha})$ be an Erd\"os-R\'enyi random graph. Fix $\ell$ and let $\alpha > \frac{\ell+1}{2\ell-1}$. 
		As $n \to \infty,$ $\E\left[\beta_{\ell,\ell}(n, n^{-\alpha})\right] \to 0$ asymptotically almost surely.
	\end{theorem}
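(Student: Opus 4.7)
My plan is to bound $\E[\beta_{\ell,\ell}(G)]$ from above by counting a suitable family of combinatorial witnesses in $G$, and then upgrade the first-moment estimate to the a.a.s.~vanishing via Markov's inequality.

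First, Lemma~\ref{lem:LowerTriangular} gives $EMC_{\ell+1,\ell}(G)=0$, so $EMH_{\ell,\ell}(G)=\ker\partial_{\ell,\ell}$. Any basis chain $(x_0,\ldots,x_\ell)\in EMC_{\ell,\ell}(G)$ is an edge-path of length $\ell$ on $\ell+1$ distinct vertices, since $\sum d(x_i,x_{i+1})=\ell$ combined with the distinctness constraint forces each $d(x_i,x_{i+1})$ to equal $1$. The crude bound $\beta_{\ell,\ell}(G)\le \dim EMC_{\ell,\ell}(G)$ yields expectation $O(n^{\ell+1}p^\ell)=O(n^{\ell+1-\alpha\ell})$, which vanishes only when $\alpha>(\ell+1)/\ell$; this is strictly weaker than the claimed threshold, so most edge-paths must be shown to contribute to the image of $\partial_{\ell,\ell}$ rather than to its kernel.

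To sharpen, I would pass to the decomposition $EMH_{\ell,\ell}(G)=\bigoplus_{a,b}EMH_{\ell,\ell}(a,b)$ and invoke Corollary~\ref{cor:homology congruence}. For $(a,b)$ with $d(a,b)=\ell$, $EMH_{\ell,\ell}(a,b)\cong \tilde H_0(\ET(a,b))$, so its rank equals the number of path-components of $\ET(a,b)$ minus $1$; for $d(a,b)<\ell$ an analogous controlled analysis of $H_0(\ET(a,b),\ETsub(a,b))$ applies. A nontrivial class requires at least two length-$\ell$ edge-paths from $a$ to $b$ that are not linked by a common sub-tuple in $\ET(a,b)$, and a structural analysis shows that two paths sharing any internal vertex are always linked, so disconnectedness forces a configuration occupying $\ell+1$ vertices with at least $2\ell-1$ edges of $G$. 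The expected count of such witness subgraphs is
\[
O(n^{\ell+1}\, p^{2\ell-1}) \;=\; O(n^{(\ell+1)-\alpha(2\ell-1)}),
\]
which tends to $0$ exactly when $\alpha>(\ell+1)/(2\ell-1)$. Markov's inequality then upgrades this first-moment decay to the asymptotically almost sure vanishing statement.

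The main obstacle is the combinatorial rigidity step: proving that any nontrivial class in $EMH_{\ell,\ell}(a,b)$ genuinely witnesses a subgraph on $\ell+1$ vertices carrying at least $2\ell-1$ edges, rather than only the $\ell$ edges of a single edge-path. I would handle this by a careful case analysis on how two length-$\ell$ $a$--$b$ edge-paths can share internal vertices, ruling out all configurations with strictly fewer than $2\ell-1$ edges by showing that any such collapse would embed both paths in a common simplex of $\ET(a,b)$. The $\ell=2$ case reduces exactly to triangle counting, consistent with Example~\ref{ex:toyexampleEMH} (where the toy graph has two triangles and $\beta_{2,2}=12$), providing a concrete sanity check on the exponent.
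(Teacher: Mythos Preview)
This theorem is not proved in the present paper; it is quoted verbatim from \cite{giusti2024eulerian} and used as input to Corollary~\ref{cor:if nonvanishing then torsion free}. There is therefore no in-paper proof to compare against, and I comment only on the internal soundness of your sketch.

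Your first-moment strategy and the target exponent $(\ell+1)/(2\ell-1)$ are correct, but the route you take through Corollary~\ref{cor:homology congruence} contains a genuine index error. That corollary gives $EMH_{k,\ell}(a,b)\cong H_{k-2}\bigl(\ET(a,b),\ETsub(a,b)\bigr)$, so for $k=\ell\ge 3$ one lands in $H_{\ell-2}$, the \emph{top} relative homology of the pair, not in $H_0$. The $\tilde H_0$ clause you invoke applies only when $k=2$. Consequently your connected-components argument (``two length-$\ell$ paths from $a$ to $b$ are linked whenever they share an internal vertex'') does not compute $EMH_{\ell,\ell}(a,b)$ except in the degenerate case $\ell=2$, and the structural claim you draw from it is unsupported.

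The argument in \cite{giusti2024eulerian} bypasses the Asao--Izumihara complex. Since $EMC_{\ell+1,\ell}=0$, one has $\beta_{\ell,\ell}=\dim\ker\partial_{\ell,\ell}$. For a single generator $(x_0,\dots,x_\ell)$, the nonzero terms $\partial^i_{\ell,\ell}(x_0,\dots,x_\ell)$, $i=1,\dots,\ell-1$, are pairwise distinct basis elements of $EMC_{\ell-1,\ell}$, so that generator lies in the kernel iff every chord $\{x_{i-1},x_{i+1}\}$ is an edge of $G$; this is precisely the $(2\ell-1)$-edge witness on $\ell+1$ vertices whose expected count is $O\!\bigl(n^{\ell+1}p^{2\ell-1}\bigr)$. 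The substantive remaining step---which your sketch gestures at but does not address---is to control contributions to $\ker\partial_{\ell,\ell}$ from \emph{linear combinations} whose boundaries cancel, e.g.\ $(a,x,y,\dots,b)-(a,x',y,\dots,b)$. One must show that any such relation is supported on a subgraph whose edge-to-vertex ratio is at least $(2\ell-1)/(\ell+1)$, so that the same threshold governs all witness types. That is the ``combinatorial rigidity'' you mention, but it is a rank statement about the boundary matrix, not a $\pi_0$ statement about $\ET(a,b)$.
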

	
	\begin{remark}
		\label{rem:non-vanishing (k,k) means k-facet}
		Notice that when the smallest facet of $\ET(a,b)$, $f_t$, is such that $\dim f_t  \sim \ell > \frac{\ell -2}{2}$, then $\ET(a,b)$ is shellable when 
		\[
		\alpha < \prod_{i=1}^{t-1}\left(\frac{\dim f_i +3}{\ell +4} \right) \sim 
		\prod_{i=1}^{t-1}\left(\frac{\ell +3}{\ell +4} \right) \sim 1.
		\] 
		
	\end{remark}
	
	Therefore, putting together Remark \ref{rem:non-vanishing (k,k) means k-facet} with Theorems~\ref{thm:torsion free} and~\ref{thm:vanishing threshold} we have the following.
	
	\begin{corollary}
		\label{cor:if nonvanishing then torsion free}
		Let $G(n,n^{-\alpha})$ be an Erd\H{o}s-R\'{e}nyi random graph.
		When the smallest facet $f_t$ of $\ET(a,b)$ and the smallest facet $g_{\tau}$ of $\ETsub(a,b)$ are such that $\dim f_t, \dim g_{\tau} \sim \ell$, if $EMH_{k,\ell}(G(n,n^{-\alpha}))$ is non-vanishing it is also torsion free.	
	\end{corollary}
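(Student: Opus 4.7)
The plan is to combine three ingredients already established in the paper: the shellability bound from Theorem \ref{thm:ET(a,b) shellable} together with its analogue Corollary \ref{cor:ETsub shellable}, the torsion-free conclusion of Theorem \ref{thm:torsion free}, and the vanishing threshold of Theorem \ref{thm:vanishing threshold}. The statement is essentially a ``coverage'' argument: show that the parameter range $\alpha \in [0,\infty)$ is covered by the union of two regimes, one in which $EMH_{k,\ell}$ is provably torsion free and one in which it is provably zero.

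First I would unpack Remark \ref{rem:non-vanishing (k,k) means k-facet} into a precise shellability statement. Under the standing hypothesis that $\dim f_t \sim \ell$, every facet $f_i$ of $\ET(a,b)$ satisfies $\dim f_i \geq (\ell-2)/2$, so Theorem \ref{thm:ET(a,b) shellable} places us in the second case, giving shellability asymptotically almost surely for
\[
\alpha < \prod_{i=1}^{t-1}\frac{\dim f_i + 3}{\ell + 4} \sim \left(\frac{\ell+3}{\ell+4}\right)^{t-1} \longrightarrow 1.
\]
The identical reasoning applied to $\ETsub(a,b)$ via Corollary \ref{cor:ETsub shellable}, using $\dim g_\tau \sim \ell$, produces the same asymptotic bound. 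Hence both complexes are shellable a.a.s.\ for $\alpha < 1$, and Theorem \ref{thm:torsion free} immediately yields that $EMH_{k,\ell}(a,b)$ is torsion free for every $k$ in this regime.

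Next I would invoke Theorem \ref{thm:vanishing threshold} to cover the complementary range. The threshold $(\ell+1)/(2\ell - 1)$ satisfies $(\ell+1)/(2\ell-1) < 1$ for all $\ell \geq 3$, so whenever $\alpha \geq 1$ we are automatically above the vanishing threshold and $\E[\beta_{\ell,\ell}(n,n^{-\alpha})] \to 0$. Passing to the direct sum decomposition $EMC_{\ast,\ell}(G) = \bigoplus_{a,b} EMC_{\ast,\ell}(a,b)$, each summand $EMH_{k,\ell}(a,b)$ is therefore vanishing a.a.s.\ in this regime. The contrapositive then closes the argument: if $EMH_{k,\ell}(G(n,n^{-\alpha}))$ is non-vanishing, then $\alpha$ cannot lie in the vanishing regime, forcing $\alpha < 1$, where the shellability of both $\ET(a,b)$ and $\ETsub(a,b)$ grants torsion-freeness of every summand, and hence of the total group.

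The main obstacle I anticipate is the asymptotic bookkeeping that bridges the two thresholds. The shellability bound is a product whose value depends on both $\ell$ and the number $t$ of facets, and one must verify that the hypothesis $\dim f_t,\dim g_\tau \sim \ell$ (which makes every factor $\sim (\ell+3)/(\ell+4)$) indeed pushes the bound up to the regime $\alpha < 1$ needed to interlock with the vanishing threshold $(\ell+1)/(2\ell-1)$. Making this overlap precise—and ensuring that the a.a.s.\ statements for shellability and for vanishing can be combined without loss—is the delicate step; once that is settled, the rest of the proof is a straightforward application of the quoted results together with the direct sum decomposition.
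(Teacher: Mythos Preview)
Your proposal is correct and follows exactly the paper's approach: the paper does not give a standalone proof of this corollary but simply says to put together Remark~\ref{rem:non-vanishing (k,k) means k-facet}, Theorem~\ref{thm:torsion free}, and Theorem~\ref{thm:vanishing threshold}, which is precisely the three-ingredient coverage argument you outline. Your explicit verification that $(\ell+1)/(2\ell-1)<1$ for $\ell\ge 3$, so that the shellability regime $\alpha<1$ interlocks with the vanishing regime, makes the heuristic more transparent than the paper's one-line justification.
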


	\section{Future directions}
	\label{sec:future directions}
	
	In this paper we investigated the regimes where an Erd\"os-R\'enyi random graph $G$ has torsion free eulerian magnitude homology groups.
	
	While the results presented have provided significant insights into the problem, several aspects remain unexplored, offering fertile ground for continued research.
	
	In this section, we propose extensions of the current work and identify open questions that could deepen the understanding of the topic.
	
	\subsection{The choice of $\ell$}
	
	The result stated in Corollary \ref{cor:if nonvanishing then torsion free} relies on the dimension of the minimal facet $f_t$ of $\ET(a,b)$ and the minimal facet $g_{\tau}$ of $\ETsub(a,b)$ being ``close enough'' to the parameter $\ell$ so that $\frac{\dim f_i +3}{\ell +4} \sim 1$ and $\frac{\dim g_j +3}{\ell +4} \sim 1$ for every other facet $f_i,g_j$. 
	
	It is thus natural to ask, how do we choose $\ell$ so that $\dim f_t \sim \ell$?
	
	First, notice that the parameter $\ell$ cannot be too big with respect to the number of vertices $n$.
	Specifically, $\ell$ cannot be of the order $n^2$.
	Indeed, suppose we pick $\ell = \frac{n(n+1)}{2}$.
	The only way we can produce a facet $f$ inducing a path of such length is if we have a path graph on $n$ vertices $V=\{1,\dots,n\}$, $(a,b)=(1,\lceil n/2 \rceil)$, and we visit vertex $n-i+1$ after vertex $i$, $i \in \{1,\dots,\lfloor n/2 \rfloor\}$, i.e. $f= (1,n,2,n-1,\dots,\lceil n/2 \rceil)$. 
	Then $\dim f= n < \frac{n(n+1)}{2}$.
	See Figure \ref{fig:path graph n vertices} for an illustration.  
	
	\begin{figure}[h]
		\centering
		\begin{tikzpicture}[node distance={20mm}, thick, main/.style = {draw, circle}, minimum size=1cm]
			
			\node[main] (1) {$1$}; 
			\node[style={draw,circle,dashed}, below of=1,node distance={1.5cm}] (a) {$a$};
			\node[main] (2) [right of=1] {$2$}; 
			\node[main] (3) [right of=2] {$3$};
			
			\node[main] (4) [right of=3] {$4$};
			\node[style={draw,circle,dashed}, below of=4,node distance={1.5cm}] (b) {$b$};
			
			\node[main] (5) [right of=4] {$5$};
			\node[main] (6) [right of=5] {$6$};
			\node[main] (7) [right of=6] {$7$};
			
			\draw (1) -- (2);
			\draw (2) -- (3);
			\draw (3) -- (4);
			\draw (4) -- (5);
			\draw (5) -- (6);
			\draw (6) -- (7);
			
			\draw[dashed] (1) -- (a);
			\draw[dashed] (4) -- (b);
			
		\end{tikzpicture} 
		\caption{In this example $(a,b)=(1,4)$ and the only facet $f$ obtained by setting $\ell =28$ is $(1,7,2,6,3,5,4)$, and $\dim f=7$.}
		\label{fig:path graph n vertices}
	\end{figure}
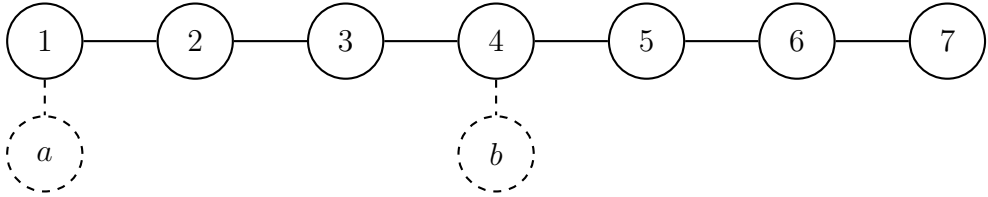
	
	We conclude that a quadratic growth rate for $\ell$ with respect to $n$ is not appropriate.
	
	On the other hand, setting $\ell = n$ we do not encounter the same problem as before.
	For example, consider the path graph in Figure \ref{fig:path graph n vertices}.
	Choosing $(a,b)=(1,4)$ and $\ell = n =7$ we find two facets $f_1=(1,2,3,6,5,4)$ and $f_2=(1,2,3,5,6,4)$.
	Both have dimension $6$ and thus $\frac{\dim f_i +3}{\ell +4} = \frac{6+3}{7+4} = \frac{9}{11} > \frac{1}{2}$.
	
	Based on this computation, along with many other examples not displayed here, we make the following conjecture.
	
	\begin{conjecture}
		Indicate the diameter of the graph $G$ by $\text{diam}(G)$.
		There exists a linear function $\varphi$ such that if $\ell \leq \varphi(\text{diam}(G))$, then $\dim f_t \sim \ell$.
	\end{conjecture}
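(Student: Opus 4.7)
The plan is to bound the minimum facet dimension $\dim f_t$ from below by a quantity that approaches $\ell$ whenever $\ell$ is linearly controlled by $\text{diam}(G)$. Recall that a facet $f_t = (x_1, \dots, x_s)$ of $\ET(a,b)$ corresponds to a maximal $a$-$b$ trail $(a, x_1, \dots, x_s, b)$, and $\dim f_t = s-1$ with $s+1$ consecutive hops summing to a total length at most $\ell$. The target is to show that under the linear constraint on $\ell$, every hop is forced to have length $1$ up to an additive loss of lower order, so that $s \sim \ell$.

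The first step would be to rewrite maximality as a local inequality. The trail is a facet exactly when, for every $v \notin \{a, b, x_1, \dots, x_s\}$ and every position $i \in \{0, \dots, s\}$ (with the conventions $x_0 = a$ and $x_{s+1}=b$),
\[
d(x_i, v) + d(v, x_{i+1}) - d(x_i, x_{i+1}) > \ell - L,
\]
where $L$ denotes the current length of the trail. The key geometric fact is that $d(x_i,v) + d(v,x_{i+1}) \ge d(x_i,x_{i+1})$ with equality iff $v$ lies on some shortest $x_i$-$x_{i+1}$ path. This yields a structural lemma: in a maximal trail, every hop of length $d \ge 2$ must have all interior vertices of all shortest paths between its endpoints already consumed by $\{a, b, x_1, \dots, x_s\}$, or else the slack $\ell - L$ is too small to absorb any insertion.

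The main counting step is then to bound the number of long hops. Since each hop has length at most $\text{diam}(G)$, and each long hop must consume a number of vertices roughly proportional to its length in order to block insertions by the structural lemma, summing over hops yields a recursive bound that forces the number of long hops to be controlled by the ratio $\ell / \text{diam}(G)$. Choosing $\varphi$ to be a modest linear function of $\text{diam}(G)$, the natural candidates being $\varphi(d) = d$ or $\varphi(d) = d + c$, should make this quantity small enough that nearly all of the $s+1$ hops have length $1$, yielding $s = \ell - o(\ell)$ and hence $\dim f_t \sim \ell$.

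The main obstacle I anticipate is controlling extremal configurations in which a maximal trail is forced into having a few long hops because the pool of insertion candidates has been exhausted by the landmarks themselves. This behaviour is sensitive to the exact interplay between $n$, $\text{diam}(G)$, and $\ell$, as already suggested by the path graph example of Figure~\ref{fig:path graph n vertices}. Pinning down the correct linear function $\varphi$ will likely require either an extremal graph-theoretic estimate saturating a Moore-type bound, or the additional hypothesis that $n$ grows at most polynomially in $\text{diam}(G)$, so that no facet can afford to traverse many diameter-sized hops.
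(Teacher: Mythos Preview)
The statement you are attempting to prove is presented in the paper as a \emph{conjecture}, not a theorem: the paper offers no proof, only the path-graph computations (Figure~\ref{fig:path graph n vertices} and the $\ell=n$ example) together with unspecified further experiments as evidence. So there is no ``paper's own proof'' to compare against; the question is whether your outline actually closes the problem.

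It does not. Your structural lemma is correct: in a facet, every interior vertex of every geodesic across a hop of length $\ge 2$ must already appear among the landmarks (otherwise it could be inserted without increasing the length). But the ``main counting step'' is where the argument breaks down. You assert that each long hop must consume a number of fresh landmarks proportional to its length, and then sum over hops. This fails because the same landmark can serve as the blocking interior vertex for \emph{many} hops simultaneously. The path-graph zigzag $(1,n,2,n-1,\dots)$ in the paper is exactly this phenomenon: every hop has length $\Theta(n)$, yet the total landmark set has size only $n$, because each landmark lies on the geodesics of essentially all the other hops. Summing the per-hop consumption with multiplicity gives $\sum_i (d_i-1) = L-(s+1)$, which can be quadratic in $s$ without contradiction. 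No recursive bound of the kind you describe follows.

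You recognise this yourself in the final paragraph, where you defer to an unspecified ``extremal graph-theoretic estimate saturating a Moore-type bound'' or an extra growth hypothesis on $n$. That deferred step \emph{is} the conjecture; everything before it is setup. As written, the proposal is a plausible heuristic framing rather than a proof, and it does not go beyond what the paper's own motivating examples already suggest.
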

	
	\subsection{Connection with the complex of injective words}
	
	A natural development of the work present in this paper (which we are already investigating) concerns a deterministic result about the presence of torsion in eulerian magnitude homology groups of graphs.
	It is the author's belief that this kind of result can be achieved by exploiting the strong connection between the eulerian magnitude chain complex and the complex of injective words.
	\\\\
	An \emph{injective word} over a finite alphabet $V$ is a sequence $w = v_1v_2\cdots v_t$ of distinct elements of $V$.
	Call $\text{Inj}(V)$ the set of injective words on $V$ partially ordered by inclusion, and recall that the \emph{order complex} of a poset $(P,\leq)$, denoted $\Delta(P)$, is the simplicial complex on the vertex set $P$, whose $k$-simplices are the chains $x_0 < \cdots < x_k$ of $P$.
	For example, if $P = [n]=\{1,\dots,n\}$ with the usual ordering, then $\Delta(P)=\Delta_{n-1}$ is the standard $(n-1)$-simplex.
	
	\begin{definition}
		A \emph{complex of injective words} is an order complex $\Delta(W)$ associated to a subposet $W \subset \text{Inj}(V)$.
	\end{definition}
	
	Farmer \cite{farmer1978cellular} proved that if $\#(V)=n$, then $\Delta(\text{Inj}(V))$ has the homology of a wedge of $D(n)$ copies of the $(n-1)$-sphere $S^{n-1}$, where $D(n)$ is the number of derangements (i.e. fixed point free permutations) in $\mathbb{S}_n$.
	The following result was obtained by Bj\"{o}rner and Wachs in \cite{bjorner1983lexicographically} as a strengthening of Farmer's theorem.
	
	\begin{theorem}[\cite{bjorner1983lexicographically}]
		$\Delta(\text{Inj}([n])) \simeq \bigvee_{D(n)} S^{n-1}$.
	\end{theorem}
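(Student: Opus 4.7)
The plan is to prove $\Delta(\mathrm{Inj}([n]))\simeq\bigvee_{D(n)}S^{n-1}$ by exhibiting a shelling of $\Delta(\mathrm{Inj}([n]))$ whose homology facets (i.e., the facets whose restriction under the shelling equals the full facet) are in explicit bijection with the derangements of $[n]$. Since a shellable complex has the homotopy type of a wedge of spheres, one $S^{d}$ per homology facet of dimension $d$, this will give the claim.

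First I would identify the facial structure. With the natural subword partial order on $\mathrm{Inj}([n])$, a saturated chain is built by successive insertions of letters, so its top element is a permutation of $[n]$ and its internal data amount to an ordering of the insertions. The order complex $\Delta(\mathrm{Inj}([n]))$ is therefore pure of dimension $n-1$; if one prefers, restricting to the prefix order gives a cleaner bijection between facets and $S_n$, and the two constructions produce complexes with the same homotopy type so I would pick whichever is technically easiest. In either case every facet has dimension $n-1$, matching the sphere dimension in the target wedge.

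Next I would exhibit an EL-labeling, following Björner. To each cover relation $u\lessdot v$, where $v$ is obtained from $u$ by inserting a letter $x$ at position $i$, assign the label $\lambda(u,v)=(x,i)\in[n]\times[n]$ and order labels lexicographically. Verifying the EL-property reduces to a direct check that on every closed interval of $\mathrm{Inj}([n])$ there is a unique saturated chain whose label sequence is strictly increasing: on such an interval, the unique increasing chain is the one that always inserts the smallest available letter into the leftmost admissible slot. By the standard EL-shellability machinery this yields a shelling of $\Delta(\mathrm{Inj}([n]))$ in which the homology facets correspond to saturated chains whose label sequences are strictly decreasing.

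Finally I would count these strictly decreasing maximal chains and show that their count is $D(n)$. The target permutation $\pi$ of a decreasing maximal chain is reconstructible from the label sequence, and the decreasing condition forces a rigid compatibility between the inserted letter $x$ and the slot index $i$ at every step; unpacking this compatibility, one finds that $\pi$ can be realized by such a chain if and only if $\pi(j)\neq j$ for all $j\in[n]$, i.e. iff $\pi$ is a derangement, and when it can be realized the chain is unique. This gives exactly $D(n)$ homology facets and completes the proof. The main obstacle, in my view, is precisely this last bijective step: matching the shelling's combinatorics against the derangement enumeration. If the direct EL-analysis becomes unwieldy, a cleaner alternative is to build a discrete Morse matching on the face poset whose critical cells are manifestly indexed by derangements, obtaining the same wedge decomposition via Forman's theorem.
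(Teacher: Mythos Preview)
The paper does not prove this theorem; it is simply quoted from Bj\"orner and Wachs as a strengthening of Farmer's homology computation, with no argument given. There is therefore no in-paper proof to compare your proposal against.

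That said, your strategy---exhibit an EL-shelling of $\mathrm{Inj}([n])$ via an insertion-based edge labeling and then identify the descending maximal chains with derangements---is precisely the approach of the cited Bj\"orner--Wachs paper, so in that sense you are reconstructing the original proof rather than offering an alternative. One remark in your outline is genuinely wrong, however: the aside that ``restricting to the prefix order \dots\ produce[s] complexes with the same homotopy type'' is false. Under the prefix order the Hasse diagram of $\mathrm{Inj}([n])$ is a rooted tree, so its order complex is a cone and hence contractible, not a wedge of $D(n)$ spheres; the subword order is essential, and you cannot swap it out for convenience. A smaller point: with the label $\lambda(u\lessdot v)=(x,i)$ you should check carefully that the unique-rising-chain condition actually holds on every interval for the lexicographic order you chose, since the combinatorics of slot indices under successive insertions is slightly delicate; Bj\"orner and Wachs use a closely related but not identical labeling. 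Once those details are in order, the final bijection between descending chains and fixed-point-free permutations is indeed the heart of the argument.
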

	
	Let now the alphabet $V$ be the vertex set of a graph $G=(V,E)$.
	Let $\text{Inj}(V)$ be the set of injective words on the vertex set $V$ and denote by $\text{Inj}(V,\ell)=\{w \in \text{Inj}(V) \text{ such that } \len(w)\leq \ell \}$, the subset containing $w \in \text{Inj}(V)$ such that length of the walk $w$ in $G$ is less than $\ell$.
	Then we have a filtration 
	\begin{equation}
		\label{eq:filtration}
		\text{Inj}(V,0) \subset \text{Inj}(V,1) \subset \cdots \subset \text{Inj}(V,\ell) \subset \cdots \subset \text{Inj}(V). 
	\end{equation}

	The following equivalence easily follows from the definition of the filtration of $\text{Inj}(V)$ and the definition of the eulerian Asao-Izumihara complex $\ET(a,b)/\ETsub(a,b)$, 
	\[
	\frac{|\text{Inj}(V,\ell)|}{|\text{Inj}(V,\ell -1)|} = \bigvee_{(a,b)} \frac{|\ET(a,b)|}{|\ETsub(a,b)|},
	\]
	
	where $|\cdot|$ denotes the geometric realization.
	\\
	
	Further, the connection between the eulerian magnitude chain complex and the complex of injective words is strengthen by the following observation.  
	
	Hepworth and Roff \cite{hepworth2024bigraded} thoroughly analyzed in the context of directed graphs the \emph{magnitude-path spectral sequence (MPSS)}, a spectral sequence whose $E^1$ page is exactly standard magnitude homology, path homology \cite{grigor2019homology} can be identified with a single axis of page $E^2$, and whose target object is reachability homology \cite{hepworth2023reachability}.
	
	Reproducing the computations proposed in \cite[Section 2]{hepworth2024bigraded} using the filtration of the complex of injective words in \ref{eq:filtration}, leads to a version of the MPSS where the $E^1$ page is exactly eulerian magnitude homology.
	Since the homology of the complex of injective words, as the target object, controls the behavior of the spectral sequence, it seems reasonable to investigate the implications of this on the eulerian magnitude chain complex.
	
	\section*{Acknowledgement}
	The author is thankful to Yasuhiko Asao, Luigi Caputi and Chad Giusti for helpful conversations throughout the development of this work, and to Radmila Sazdanovic and Patrick Martin for identifying an error in an earlier version of this paper.
	
	\bibliographystyle{amsplain}
	\bibliography{bibliography}

\end{sloppypar}
\end{document}